\font\cmssl=cmss10 at 12 pt
\newtheorem{thm}{Theorem}
\newtheorem{lem}[thm]{Lemma}
\newtheorem{prop}[thm]{Proposition}
\newtheorem{defn}[thm]{Definition}
\newtheorem{cor}[thm]{Corollary}
\newtheorem{rem}[thm]{Remark}
\newtheorem{notation}[thm]{Notation}
\newtheorem{exa}[thm]{Example}
\title{Components of generalised complex structures on transitive Courant algebroids}
\date{\today}
\author{Vicente Cort\'es and  Liana David}
\begin{document}

\maketitle
\abstract{Generalised almost complex structures $\mathcal J$ on transitive Courant algebroids $E$ are studied in terms of their 
components with respect to a splitting $E\cong TM \oplus T^*M \oplus \mathcal G$, where $M$ denotes the base
of $E$ and $\mathcal G$ its bundle of quadratic Lie algebras. Necessary and sufficient integrability equations for $\mathcal J$ are established in this formalism. 
As an application, it is shown that the integrability of $\mathcal J$ implies that one of the components defines a Poisson structure on $M$.
Then the structure (normal form) of generalised complex structures for which the Poisson structure is non-degenerate is determined. 
It is shown that it is fully encoded in a pair $(\omega , \rho )$ consisting of a symplectic structure $\omega$ on $M$ and 
a representation $\rho : \pi_1(M) \to \mathrm{Aut}(\mathfrak g, \langle \cdot ,\cdot \rangle_{\mathfrak{g}}, J_{\mathfrak{g}})$
by automorphism of a quadratic Lie algebra $(\mathfrak g, \langle \cdot ,\cdot \rangle_{\mathfrak{g}})$ commuting with an 
integrable (in the sense of Lie algebras) skew-symmetric complex structure $J_{\mathfrak{g}}$. Examples of such representations and obstructions for the existence of non-degenerate generalised complex structures  are discussed.   Finally, a construction of  generalised complex structures 
on transitive Courant algebroids over complex manifolds for which the Poisson structure degenerates along a complex analytic hypersurface is presented.  

\medskip\noindent
{\it MSc classification:} 53D18 (Generalized geometry a la Hitchin); 53D17 (Poisson manifolds) 

\medskip\noindent
{\it Key words:} generalised complex structures, transitive Courant algebroids, Poisson structures
}

\tableofcontents
\section{Introduction}

As is well-known, the notion of a generalised complex structure on a manifold $M$ was proposed by Hitchin \cite{H} as a concept unifying 
complex and symplectic structures on $M$.  For this purpose, the tangent bundle $TM$ was replaced 
by the generalised tangent bundle $\mathbb{T}M$, which carries the structure of an exact Courant algebroid on which 
the generalised complex structure is realised as an endomorphism. Generalised complex structures on exact Courant algebroids 
were studied in \cite{gualtieri-thesis,gualtieri-annals,crainic} and many other works. 

Contrary to exact Courant algebroids, transitive Courant algebroids $E$ as classified in \cite{chen}  incorporate a bundle of quadratic Lie algebras $\mathcal G$ (such as 
the adjoint bundle of a principle bundle). In fact, they admit a dissection
\begin{equation}\label{standard:eq} E\cong TM \oplus T^*M \oplus \mathcal G,\end{equation} 
that is, an isomorphism 
to a standard Courant algebroid. The additional bundle $\mathcal G$ and the entailed geometric structures 
make the study of geometric structures on such  Courant algebroids richer and especially interesting from the point of heterotic supergravity and string theory \cite{GF_Rubio_Tipler}. 

Recently, a Darboux theorem for (regular) generalised complex structures on transitive Courant algebroids was obtained in \cite{darboux}, generalising Gualtieri's Darboux 
theorem  \cite{gualtieri-thesis,gualtieri-annals}. 

In the present article 
we continue the study of generalised complex structures $\mathcal J$ on transitive Courant algeboids $E\to M$ by analysing the components of 
$\mathcal J$ with respect to the decomposition \eqref{standard:eq} and deriving various structural results.

Section \ref{prelim:sec} is intended to collect basic facts and fix notation.

As a first step,  in Section \ref{components-sect} 
we characterise generalised almost complex structures $\mathcal J$  in terms of the tensor fields $(A,B,C,J, \mu, \nu)$ appearing as components and their algebraic relations (see Lemma~\ref{comp:lem}). 
The more involved system of partial differential equations for the component fields which expresses the integrability of $\mathcal J$ is obtained in Theorem \ref{integr-thm}.  The strength of the system is illustrated   in Proposition \ref{poisson:prop} by proving 
that the component $B : T^*M \to TM$ of $\mathcal J$ always defines a Poisson structure.  
The result is well-known in the exact case \cite{Hi_2006}.  
The proof of Proposition \ref{poisson:prop} uses only one of the  10 equations of the system. 

In Section \ref{2nd_data:sec}, the components $(A,B,C,J, \mu, \nu)$ are related to a different system of data $(W, \mathcal D,  \sigma, \varepsilon )$ encoding a   generalised almost 
complex structure. The data  $(W, \mathcal D,  \sigma, \varepsilon )$ were introduced in \cite{darboux} and consist of a pair of bundles and a pair of  tensor fields. A precise dictionary is 
provided in Theorem \ref{comp_to_data:thm}. In this way, results obtained in \cite{darboux} can be conveniently translated into the components formalism.

The above dictionary is used in Section \ref{nondegGCS:sec} to prove a structure theorem for  non-degenerate generalised complex structures, where  a 
generalised almost complex structure is called  
non-degenerate if the corresponding tensor field $B$ is non-degenerate. We  describe non-degenerate generalised almost complex structures in Proposition \ref{nondeg-alg} and the subclass of integrable ones in Proposition \ref{intBsymp:thm}. Then we show in Theorem \ref{class_nondeg:thm} that, up to isomorphisms, the underlying 
Courant algebroid is an untwisted  Courant algebroid (as defined in Section \ref{prelim:sec}) and $\mathcal J$ takes the form 
$\mathcal J_\omega \oplus A$, where $\mathcal J_\omega$ is the generalised complex structure on $TM\oplus T^*M$ associated with the
symplectic structure $\omega = -B^{-1}$ and $A$ is a parallel field of skew-symmetric fiber-wise complex structures on the flat bundle $\mathcal G$. 
The following classification in terms of representations of the fundamental group of $M$
 (together with further details) 
 is given in Theorem \ref{classif:cor}:

\medskip 
\noindent 
{\bf Theorem} {\it Let $(M, \omega )$ be a (connected)  symplectic manifold and\linebreak[4]  $(\mathfrak g , \langle \cdot , \cdot \rangle_{\mathfrak{g}})$ a quadratic Lie algebra.  
There is a natural bijection between, on the one hand, isomorphism classes  of non-degenerate generalised complex structures with underlying symplectic structure $\omega$ on transitive Courant algebroids $E$ over $M$ with quadratic Lie algebra bundle of fiber type $(\mathfrak g , \langle \cdot , \cdot \rangle_{\mathfrak{g}})$ and, on the other hand,  isomorphism classes  of pairs $(J_\mathfrak{g}, \rho )$, where $J_\mathfrak{g}$ 
is a skew-symmetric integrable complex structure on $\mathfrak g$ and 
$\rho : \pi_1(M) \to \mathrm{Aut}(\mathfrak{g},\langle \cdot , \cdot \rangle_{\mathfrak{g}}, J_\mathfrak{g})$ is a representation by $J_{\mathfrak{g}}$-linear automorphisms of the quadratic Lie algebra.}

\medskip\noindent
 For illustration,  the construction is specialised  in  various examples  providing additional information.  In particular, we present examples of non-degenerate generalised  complex structures on transitive Courant algebroids of  heterotic and non-heterotic type and examples of quadratic Lie algebras which cannot be the fiber type
of a transitive Courant algebroid  admitting a non-degenerate generalised complex structure.

\medskip
Finally, 
in Section \ref{deg:Sec} we present a class of generalised  complex structures associated with 
a complex manifold $(M,J)$ endowed with a holomorphic Poisson structure $\beta$ and a representation $\rho$ of  the fundamental group $\pi_1(M)$ 
by automorphisms of a quadratic Lie algebra endowed with an integrable skew-symmetric complex structure
 (see Proposition~\ref{poisson_ex:prop}). 
Such  generalised  complex structures  are non-degenerate on the complement of the complex analytic hypersurface $\{ \det \beta =0\} \subset M$. 
This includes examples with compact base manifold $M$ and infinite holonomy $\rho (\pi_1(M))$.

\medskip\noindent 
{\bf Acknowledgements.} Research of V.C.\ is funded by the Deutsche Forschungsgemeinschaft (DFG, German Research Foundation) under SFB-Gesch\"aftszeichen 1624 -- Projektnummer 506632645 and under Germany's Excellence Strategy, EXC 2121``Quantum Universe,''\linebreak[4] 
390833306.  L.D.\ thanks University of Hamburg for hospitality, excellent working conditions and financial support during her visit in 
June 2025.

\section{Preliminary material}
\label{prelim:sec}
For completeness of our exposition and to fix notation we now recall basic facts 
on  standard  Courant algebroids. We assume that the reader has some familiarity with the theory of Courant algebroids. 
For more details, see e.g.\  \cite{chen,cortes-david-JSG, darboux}. We follow the conventions we used in our previous works  \cite{cortes-david-JSG} and \cite{darboux}.\

\begin{notation}{\rm For two forms $\alpha \in \Omega^{p}(M, \mathcal G)$ and  $\beta \in \Omega^{q} (M, \mathcal G)$ with values in a vector bundle  $\mathcal G\rightarrow M$ with scalar product 
$\langle \cdot , \cdot \rangle_{\mathcal G}$, we denote by  $\langle \alpha \wedge  \beta \rangle_{\mathcal G}  \in \Omega^{p+q}(M, \mathcal G)$ 
the $(p+q)$-form with values in $\mathcal G$ obtained by combining the exterior product of (scalar-valued)  forms on $M$ with the scalar product
of $\mathcal G .$ For the exterior product of  forms on $M$ we use the convention determined by the recursive relation
 $$
i_{X} (\alpha \wedge \beta ) =  (i_{X} \alpha )\wedge \beta + (-1)^{p} \alpha \wedge i_{X} \beta ,
$$
for any $\alpha\in \Omega^{p}(M)$ and $\beta \in \Omega^{q}(M)$, where $i_{X} :\Omega^{*} (M) \rightarrow \Omega^{* -1} (M)$ is 
the interior product by $X\in  {\mathfrak X}(M)$, defined by $(i_{X} \omega )(X_{1}, \cdots , X_{p-1}) := \omega  (X, X_{1}, \cdots , X_{p-1})$
for any $\omega\in \Omega^{p}(M)$ and $X_{i} \in {\mathfrak X}(M).$ In particular, 
$(\alpha \wedge \beta )(X, Y) = \alpha (X) \beta (Y) -\alpha  (Y)\beta (X)$, for any $\alpha , \beta \in \Omega^{1}(M)$,
$$
(\omega_{1} \wedge \omega_{2} )(X, Y, Z) = \sum_{(X:Y:Z)} \omega_{1} (X)\omega_{2}(Y, Z)
$$
and 
$$
(\omega_{2} \wedge \omega_{2} )(X, Y, Z, V) = 2\sum_{(X:Y:Z)} \omega_{2} (X, Y) \omega_{2} (Z, V)
$$
for any $\omega_{i}\in \Omega^{i}(M)$
($i=1,2$)   where $\sum_{(X:Y:Z)}$ denotes sum over cyclic permutations.\

We shall often use the same notation for a linear map between real vector spaces  and its complex linear extension to the complexified 
vector spaces.  Unless otherwise stated, we only consider Courant algebroids of neutral signature. All manifolds are assumed connected. 
}
\end{notation}

Recall that any standard Courant algebroid $E= \mathbb{T}M \oplus \mathcal G$, where $\mathbb{T}M = TM\oplus T^{*}M$,   is defined by a  quadratic Lie algebra bundle
$(\mathcal G , [\cdot , \cdot ]_{\mathcal G}, \langle \cdot , \cdot \rangle_{\mathcal G})$  and  data
$(\nabla , R, H)$, where $\nabla$ is a connection on $\mathcal G$   which  preserves   $[\cdot , \cdot ]_{\mathcal G}$ and 
$\langle \cdot , \cdot \rangle_{\mathcal G}$, 
\begin{equation}\label{def-data}
R^{\nabla}(X, Y) r = [R(X, Y), r]_{\mathcal G},\ d^{\nabla}R =0,\ d H = \langle R\wedge R\rangle_{\mathcal G},
\end{equation}
for any $X, Y\in {\mathfrak  X} (M)$, $r\in \Gamma (\mathcal G)$ and  $R^{\nabla}$ denotes the  curvature of $\nabla$.
The anchor of $E$ is the natural projection  $\pi : E\rightarrow TM$ and its  scalar product is given by
\begin{equation}
\langle X +\xi + r, Y+\eta + \tilde{r} \rangle = \frac{1}{2} ( \eta (X) +\xi (Y)) + \langle r, \tilde{r} \rangle_{\mathcal G}
\end{equation}
where $X+\xi  , Y+\eta\in \mathbb{T}M$ and $r, \tilde{r} \in \mathcal G.$
The Dorfman bracket  $[\cdot , \cdot ] : \Gamma (E)\times \Gamma (E) \rightarrow \Gamma (E)$ is given by
\begin{align}
\nonumber& [X, Y] = \mathcal L_{X}Y + i_{Y} i_{X} H + R(X, Y)\\
\nonumber& [ X, r] =  - 2\langle i_{X}R, r\rangle_{\mathcal G} + \nabla_{X} r \\
\nonumber& [r_{1}, r_{2} ] = 2 \langle \nabla r_{1}, r_{2}\rangle_{\mathcal G} +  [r_{1}, r_{2} ]_{\mathcal G} \\
\label{dorfman1}& [X, \eta ] = \mathcal L_{X} \eta ,\ [\eta_{1}, \eta_{2} ] = [ r, \eta ] =0,
\end{align}
for any $X , Y\in {\mathfrak X}(M)$,   $\eta , \eta_{1}, \eta_{2} \in \Omega^{1}(M)$ and $r, r_{1} r_{2} \in \Gamma (\mathcal G)$, 
together with 
\begin{equation}\label{dorfman2}
[u, v] + [v, u] = 2 d \langle u, v\rangle ,\ \forall u, v\in \Gamma (E).
\end{equation} 
The natural projections of a standard Courant algebroid $E=\mathbb{T}M \oplus \mathcal G$ to the direct summands $T^*M$ and $\mathcal G$ will be denoted by $\pi_{T^*M}$ and $\pi_{\mathcal G}$, respectively.\

The parallel transport defined by $\nabla$ induces  isomorphisms between the fibers $(\mathcal G, [\cdot , \cdot ]_{\mathcal G},
\langle \cdot , \cdot \rangle_{\mathcal G})\vert_{p}$, $p\in M$, and a fixed quadratic Lie algebra $(\mathfrak{g}, \langle \cdot , \cdot \rangle_{\mathfrak{g}})$, called the fiber type of $\mathcal G.$\

By an untwisted Courant algebroid we mean a standard Courant algebroid  $E = \mathbb{T}M\oplus \mathcal G$ for which $R = 0$ and  $H =0$. In particular,   
the connection $\nabla$ is flat and, if $M$ is simply connected,  then $\mathcal G = M\times \mathfrak{g}$ is the trivial vector bundle with fiber a 
quadratic Lie algebra. 
By a  heterotic Courant algebroid 
we mean a transitive Courant algebroid  for which 
the quadratic Lie algebra bundle is the adjoint bundle of a principal bundle (with the connection induced from
a principal connection).\

As proved in \cite{chen}, any
transitive Courant algebroid is isomorphic to a standard Courant algebroid.\
Here we follow the standard terminology according to which an isomorphism $I: E\to E'$ 
of Courant algebroids over $M$  is an isomorphism of the underlying vector bundles 
$\mathrm{pr} : E\to M$ and $\mathrm{pr}' : E' \to M$ intertwining the anchors, scalar products and  Dorfman brackets. 
Isomorphisms of vector bundle are fiber-preserving in the sense that $\mathrm{pr}' \circ I = \mathrm{pr}$. 
More generally, one can consider equivalences of  vector bundles and Courant algebroids, which corresponds to weakening 
the latter equation to  $\mathrm{pr}' \circ I = \varphi \circ \mathrm{pr}$, where $\varphi$ is a diffeomorphism of $M$. 

A  Courant algebroid isomorphism $I : E_{1} \rightarrow E_{2}$ between two standard Courant algebroids 
over a manifold $M$, with 
quadratic Lie algebra bundles $(\mathcal G_{i}, [\cdot  , \cdot ]_{\mathcal G_{i}}, \langle \cdot , \cdot \rangle_{\mathcal G_{i}} )$ and
defining  data  $(\nabla^{i}, R_{i}, H_{i})$,  is determined by a system $(K, \Phi  , \beta)$ where
$K\in \mathrm{Isom} (\mathcal G_{1}, \mathcal G_{2})$ is an isomorphism of quadratic Lie algebra bundles, 
$\Phi \in \Omega^{1}(M, \mathcal G_{2})$,  $\beta \in \Omega^{2}(M)$, 
as follows:
\begin{align}
\nonumber& I(X) = X+  i_{X}\beta - \Phi^{*}\Phi  (X) +\Phi (X) \\
\nonumber& I(\eta ) =\eta\\
\label{def-iso}& I(r)  = - 2 \Phi^{*} K(r) + K(r),
\end{align}
for any $X\in {\mathfrak X}(M)$,
$\eta \in \Omega^{1}(M)$, $r\in \Gamma (\mathcal G_{1})$. 
Above $\Phi^{*} : \mathcal G_{2} \rightarrow T^{*}M$ is defined by 
$$
(\Phi^{*} r) (X):= \langle  r, \Phi (X)\rangle_{\mathcal G_{2}},\ \forall  r\in \mathcal G_{2},\ X\in  {\mathfrak X}(M).
$$
The system $(K, \Phi , \beta)$ is subject to the following conditions:
\begin{align}
\nonumber&\nabla_{X}^{2} r = K \nabla^{1}_{X} ( K^{-1} r) + [r , \Phi (X)]_{\mathcal G_{2}}\\
\nonumber&  R_{2}(X, Y) = K R_{1}(X, Y) -(d^{\nabla^{2}} \Phi )(X, Y) -  [\Phi (X), \Phi (Y) ]_{\mathcal G_{2}}\\
\label{def-cond}& H_{2} = H_{1} - d\beta -  \langle (KR_{1} + R_{2} )\wedge \Phi\rangle_{\mathcal G_{2}} + c_{3}
\end{align}
where  $X, Y\in {\mathfrak X}(M)$, $r\in \Gamma (\mathcal G_{1})$
and
$$
 c_{3} (X, Y, Z):= \langle \Phi (X), [ \Phi (Y), \Phi (Z) ]_{\mathcal G_{2}} \rangle_{\mathcal G_{2}}.
 $$ 
 Conversely, if   $E_{1}  = \mathbb{T}M\oplus \mathcal G_{1} $ is a standard Courant algebroid with  defining data 
$(\nabla^{1} , R_{1}, H_{1})$, 
then  any system $(K, \Phi , \beta )$ where 
 $ K: \mathcal G_{1}\rightarrow  \mathcal G_{2} $ an isomorphism of quadratic Lie algebra bundles, 
$\Phi \in \Omega^{1} (M, \mathcal G_{2})$  and  $\beta \in \Omega^{2}(M)$,  defines  a new standard Courant algebroid $E_{2}$
with  quadratic Lie algebra bundle $\mathcal G_{2}$  and  defining data  $(\nabla^{2}, R_{2}, H_{2})$ 
related to $(\nabla^{1}, R_{1}, H_{1})$ 
by~(\ref{def-cond}). 
If  an isomorphism $I: E_{1} \rightarrow E_{2}$ is defined by $(K, \Phi , \beta )$, then its inverse is defined by
$(K^{-1}, - K^{-1} \Phi , - \beta )$ (a consequence of Proposition 6 of \cite{cortes-david-JSG}).\\

A  generalised almost complex structure on a Courant algebroid $E$ is a smooth, skew-symmetric,  field of endomorphisms
$\mathcal J \in \Gamma  (\mathrm{End}\, E)$ which satisfies $\mathcal J^{2} = -\mathrm{Id}.$  We say that $\mathcal J$ is integrable
(or is a generalised complex structure) if the space of sections of its  $(1,0)$-bundle 
$L = \{ u\in E^{\mathbb{C}}\mid  \mathcal J u  = i u\}$  is closed under the Dorfman bracket of $E$.  
This is equivalent to $N_{\mathcal J} =0$, where $N_{\mathcal J}\in \Gamma (\bigwedge^{2} E^{*}\otimes E)\cong
\Gamma (\bigwedge^{2} E^{*}\otimes E^{*})$  is the Nijenhuis tensor defined by 
$$
N_{\mathcal J} (u, v) := [ \mathcal J u , \mathcal Jv] - [u, v ] -\mathcal J ( [\mathcal J u , v] +[u, \mathcal J v]),
$$
for all $u,v\in \Gamma (E)$. 
It is skew-symmetric  in all three arguments and therefore a section of $\Gamma (\bigwedge^3E^*)$.
Any complex structure $J$ and symplectic
form $\omega$ define generalised complex structures on the untwisted generalised tangent bundle $\mathbb{T}M$, by 
$$
\mathcal J_{J} = \left( \begin{tabular}{cc}
$J$ & $0$\\
$0$ & $- J^{*}$
\end{tabular}\right),\quad \mathcal J_{\omega} = \left( \begin{tabular}{cc}
$0$ & $- \omega^{-1}$\\
$\omega$ & $0$
\end{tabular}\right)
$$
where $\omega^{-1} $ is the inverse of  the map $\omega : TM \rightarrow T^{*}M$ which assigns to a vector $X$ the covector $i_{X}\omega .$

\section{Components of generalised almost complex structures}\label{components-sect}
Let $E = \mathbb{T}M \oplus \mathcal G$ be a standard Courant algebroid with scalar product $\langle \cdot , \cdot \rangle .$

\begin{lem} \label{comp:lem} There is a natural bijection between generalised almost complex structures $\mathcal J$   on $E$ and 
data $J\in \Gamma (\mathrm{End}\, TM)$, $A\in \Gamma (\mathrm{End}\, \mathcal G)$, $B: T^*M \to TM$, $\nu : T^*M \to \mathcal G$,
$C: TM \to T^*M$, $\mu : TM \to \mathcal G$ subject to the skew-symmetry conditions 
\[ B^* =-B,\; C^*=-C,\; A^* =-A\] 
and the quadratic equations
\begin{eqnarray}
&&J^2+BC-\nu^*\mu = -\mathrm{Id}_{TM}\label{J^2:eq}\\
&&-\mu \nu^*-\nu \mu^* +A^2 = -\mathrm{Id}_{\mathcal G}\label{A2:eq}\\
&&JB-BJ^*-\nu^*\nu=0\label{JB:eq}\\
&&\mu B -\nu J^* + A \nu =0\label{omegaB:eq}\\
&&CJ-J^*C -\mu^*\mu=0\label{five:eq}\\
&&\mu J + \nu C + A\mu =0\label{six:eq},
\end{eqnarray}
where $J^* : T^*M \to T^*M$, $A^* : \mathcal G \to \mathcal G$,  $\nu^* : \mathcal G \to TM$, $\mu^* : \mathcal G \to T^*M$, $B^*: T^*M \to TM$ and $C: TM \to T^*M$  (identified with their trivial extensions to $E$)  are the metric adjoints with respect to the scalar product on $E$  (which for $B, C$ and $J$ coincide  
with the dual map): 
\begin{eqnarray}
&&
(J^*\xi)(X)=\xi (JX),\; (B^*\xi)(\eta ) = \xi (B\eta),\; (C^*X)(Y) = X(CY)\\
&&\langle A^*r,s\rangle =  \langle r, As\rangle,\;  \langle  \nu^*r , \xi \rangle = \langle r,\nu \xi \rangle,\; \langle \mu^* r,X\rangle = \langle r, \mu X\rangle
\end{eqnarray}
for all $X, Y\in TM$, $\xi, \eta \in T^*M$, $r, s \in \mathcal G$. 
\end{lem}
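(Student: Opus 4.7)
The plan is to expand $\mathcal J\in\Gamma(\mathrm{End}\,E)$ as a $3\times 3$ block matrix
\[ \mathcal J = \begin{pmatrix} J & B & P \\ C & D & Q \\ \mu & \nu & A \end{pmatrix} \]
with respect to the splitting $E = TM\oplus T^*M \oplus \mathcal G$, and then translate each of the two defining conditions on $\mathcal J$ --- skew-symmetry with respect to $\langle\cdot,\cdot\rangle$ and $\mathcal J^2=-\mathrm{Id}$ --- into equations on the blocks. The whole argument is bookkeeping, but it must track the factor $\tfrac12$ that appears in the $TM$-$T^*M$ pairing but not in the $\mathcal G$-pairing, which is where essentially the only potential pitfall lies.

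First, I would use skew-symmetry to eliminate three of the nine block entries. Testing $\langle \mathcal Ju,v\rangle +\langle u,\mathcal Jv\rangle=0$ on pairs of elements taken from the same summand yields $B^*=-B$, $C^*=-C$ (the factor $\tfrac12$ cancels, so the metric adjoint agrees with the dual map), and $A^*=-A$. The mixed pair $(X,\eta)\in TM\times T^*M$ forces $D=-J^*$. The mixed pair $(\xi,r)\in T^*M\times\mathcal G$ produces $\tfrac12\xi(Pr)+\langle \nu\xi,r\rangle_{\mathcal G}=0$, and comparing with the convention $\tfrac12\xi(\nu^*r)=\langle r,\nu\xi\rangle_{\mathcal G}$ from the statement gives $P=-\nu^*$; analogously $Q=-\mu^*$. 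So the matrix of $\mathcal J$ is forced into the form
\[ \begin{pmatrix} J & B & -\nu^* \\ C & -J^* & -\mu^* \\ \mu & \nu & A \end{pmatrix}. \]

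Next, I would compute $\mathcal J^2$ directly from this block form and impose $\mathcal J^2=-\mathrm{Id}$. The diagonal $(TM,TM)$ block yields (\ref{J^2:eq}), the $(\mathcal G,\mathcal G)$ block yields (\ref{A2:eq}), the $(TM,T^*M)$ block yields (\ref{JB:eq}), the $(T^*M,\mathcal G)$ block yields (\ref{omegaB:eq}), the $(T^*M,TM)$ block yields (\ref{five:eq}), and the $(TM,\mathcal G)$ block yields (\ref{six:eq}). The remaining three block equations --- the $(T^*M,T^*M)$ diagonal and the two mixed blocks with $\mathcal G$ on the right --- are not new: taking adjoints of (\ref{J^2:eq}), (\ref{omegaB:eq}) and (\ref{six:eq}) with respect to the $E$-scalar product and using the already-established skew-symmetries $B^*=-B$, $C^*=-C$, $A^*=-A$, together with the fact that $\nu,\nu^*$ and $\mu,\mu^*$ are mutual adjoints, reproduces exactly the three redundant equations. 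Hence the six displayed relations plus the three skew-symmetries are both necessary and sufficient.

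The converse direction is then immediate: starting from data $(J,A,B,C,\mu,\nu)$ satisfying the skew-symmetries and the six quadratic equations, define $\mathcal J$ by the block matrix above; its skew-symmetry follows by reversing the first step, and $\mathcal J^2=-\mathrm{Id}$ follows from the six given equations and their three dual consequences. The main obstacle is therefore purely notational --- staying consistent about the factor $\tfrac12$ in the scalar product when extracting $\nu^*$ and $\mu^*$ --- rather than conceptual.
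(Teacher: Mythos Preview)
Your proof is correct and follows exactly the same approach as the paper: write $\mathcal J$ as a $3\times 3$ block matrix, deduce the form of the off-diagonal blocks and the three skew-symmetry relations from $\mathcal J^*=-\mathcal J$, and obtain the six quadratic equations (with three redundant ones eliminated by taking adjoints) from $\mathcal J^2=-\mathrm{Id}$. The paper's own proof simply records the block matrix and asserts these two equivalences without writing out the block-by-block verification; your version spells out the details (including the careful tracking of the factor $\tfrac12$), but there is no difference in method.
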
 
\begin{proof} Given data $J, A, B, C, \mu, \nu$ as above, the corresponding generalised almost complex structure is given by 
\begin{equation}\label{form-J}
\mathcal J = \left( \begin{array}{ccc} 
J&B&-\nu^*\\
C&-J^*&-\mu^*\\
\mu & \nu&A
\end{array}
\right) 
\end{equation}
with respect to the decomposition $E= TM \oplus T^*M \oplus \mathcal G$. 
The skew-symmetry conditions are equivalent to $\mathcal J^* = -\mathcal J$. The quadratic equations are equivalent to $\mathcal J^2 =-\mathrm{Id}$. 
\end{proof}
Sometimes we will consider $B\in \Gamma (\bigwedge^2 TM)$ as a bivector and $C\in \Omega^2(M)=\Gamma (\bigwedge^2 T^*M)$ as a two-form.

\subsection{The Poisson structure}\label{Poisson-sect}

The full set of equations for the integrability of $\mathcal J$ in terms of components is stated in  Lemma \ref{integr-comp} and
Theorem \ref{integr-comp-thm}.  An important consequence is the existence of a Poisson 
bivector which underlies any generalised complex structure.  Another application of the integrability equations  will be  given in Section \ref{deg:Sec}.

\begin{prop}\label{poisson:prop} Let $\mathcal J $ be a generalised almost complex structure with components  $J, A, B, C, \mu , \nu .$ If $\mathcal J$ is integrable, then $B\in \Gamma ( {\bigwedge}^{2} TM)$ is a Poisson bivector.
\end{prop}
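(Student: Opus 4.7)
The plan is to extract the Poisson condition $[B,B]_{\mathrm{SN}}=0$ as a single consequence of $N_{\mathcal J}=0$ applied to two one-forms $\eta_1,\eta_2\in \Omega^1(M)$, by projecting the identity $N_{\mathcal J}(\eta_1,\eta_2)=0$ onto the $TM$-component via the anchor $\pi : E\to TM$.

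By the explicit form (\ref{form-J}) of $\mathcal J$, one has $\mathcal J\eta = B\eta - J^*\eta + \nu\eta$ for $\eta\in T^*M$, so $\pi(\mathcal J\eta)=B\eta$. Since the anchor is a morphism from the Dorfman bracket to the Lie bracket on $\mathfrak X(M)$, this immediately yields
\[
\pi[\mathcal J\eta_1,\mathcal J\eta_2] \,=\, [B\eta_1, B\eta_2].
\]
Next, using (\ref{dorfman1}) together with (\ref{dorfman2}) and the vanishing of all Dorfman brackets between sections of $T^*M\oplus\mathcal G$, one computes
\[
[\mathcal J\eta_1,\eta_2] \,=\, \mathcal L_{B\eta_1}\eta_2,\qquad [\eta_1,\mathcal J\eta_2] \,=\, -\mathcal L_{B\eta_2}\eta_1 + d\bigl(\eta_1(B\eta_2)\bigr),
\]
both of which are pure one-forms. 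Applying $\mathcal J$ to a one-form $\alpha$ and projecting by $\pi$ returns $B\alpha$. Collecting all contributions, the $TM$-component of the identity $N_{\mathcal J}(\eta_1,\eta_2)=0$ reads
\[
[B\eta_1,B\eta_2] \,=\, B\bigl(\mathcal L_{B\eta_1}\eta_2 - \mathcal L_{B\eta_2}\eta_1 + d\,\eta_1(B\eta_2)\bigr),
\]
which is precisely the Koszul identity equivalent to $B$ being a Poisson bivector, i.e.\ $[B,B]_{\mathrm{SN}}=0$ in the Schouten--Nijenhuis bracket.

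The main obstacle is simply the bookkeeping involved in expanding $[\mathcal J\eta_1,\mathcal J\eta_2]$ through (\ref{dorfman1}): each of the three summands $B\eta_i$, $-J^*\eta_i$, $\nu\eta_i$ produces bracket terms involving the twist $H$, the curving $R$, the connection $\nabla$ and the fiber bracket $[\cdot,\cdot]_{\mathcal G}$. The essential simplification, which bypasses all this, is that every such auxiliary term lands in $T^*M\oplus\mathcal G$ and is killed by the anchor $\pi$; only the vectorial information encoded by $B$ survives the projection. This is why just one of the ten integrability equations of Theorem~\ref{integr-thm} is needed to establish the statement, in accordance with the remark made in the introduction.
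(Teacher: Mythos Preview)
Your proof is correct and follows essentially the same approach as the paper: both arguments compute the $TM$-component $\pi N_{\mathcal J}(\eta_1,\eta_2)$ for $\eta_1,\eta_2\in\Omega^1(M)$ and identify it with the Schouten bracket condition for $B$. Your use of the anchor-morphism property $\pi[\,\cdot\,,\,\cdot\,]=[\,\pi\cdot\,,\pi\cdot\,]$ to bypass the explicit expansion of $[\mathcal J\eta_1,\mathcal J\eta_2]$ is a tidy shortcut, but the underlying argument is the same one the paper records (via relation~\eqref{poisson} of Lemma~\ref{integr-comp}).
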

\begin{proof}   The claim follows from relation 
 (\ref{poisson}) (see  Lemma \ref{integr-comp}). More precisely, 
$$
\pi N_{\mathcal J} (\xi , \eta ) = \mathcal P_{B} (\xi , \eta ),
$$
where 
$$ 
\mathcal P_{B} (\xi , \eta ) := {\mathcal L}_{B (\xi  )}  (B\eta )  - B(\mathcal L_{B (\xi ) } \eta ) + B(\mathcal L_{B (\eta )} \xi ) - B d (B (\eta , \xi )),
$$
for any $\xi , \eta \in \Omega^{1}(M).$
When $\mathcal J$ is integrable, $\mathcal P_{B} =0$, i.e.\ $B$ is a Poisson bivector.  Here we are using that 
$\mathcal P_{B}$ coincides with the Schouten bracket $[[B, B]]$. To verify  this, it suffices to observe that $\mathcal P_{B}$ is tensorial and has 
the same components $\mathcal P_{B}^{ijk}=\sum_{(i:j:k)}\sum_{\ell} B^{\ell i}\partial_{\ell}B^{jk}$ as the Schouten bracket $[[B, B]]$ in local coordinates.
\end{proof}

\section{Computation of the data $(W, \mathcal D,  \sigma, \varepsilon )$}
\label{2nd_data:sec}
Recall \cite{darboux} that the $(1,0)$ bundle $L= \{ v -i\mathcal Jv \mid v\in E \}$ of a generalised almost complex structure $\mathcal J$  on 
a standard Courant algebroid $E= \mathbb{T}M\oplus \mathcal G$ with $\pi (L)$ of constant rank can be described in terms of data  $(W, \mathcal D,  \sigma, \varepsilon )$, where 
$W=\pi (L) \subset (TM)^\mathbb{C}$ is a (complex) subbundle, 
\begin{equation}\label{D:eq}\mathcal D = \pi_{\mathcal G} (\ker \pi|_L)\subset \mathcal G^{\mathbb{C}}\end{equation} 
is a maximally isotropic subbundle, $\sigma \in \Gamma (\mathrm{Hom}(W,  \mathcal G^{\mathbb{C}}))$ and $\varepsilon \in \Gamma (\bigwedge^2 W^*)$. More precisely, 
\begin{eqnarray} L &=& L(W, \mathcal D,  \sigma, \varepsilon )\nonumber\\
&=& \{ X+\xi + \sigma X + r \mid X\in W,\; \xi \in (T^*M)^\mathbb{C},\;r\in \mathcal D,\nonumber\\
&&
\xi (Y)= 2 \varepsilon (X,Y) -\langle \sigma (Y), \sigma (X) +2r\rangle\; \forall Y\in W\}\label{L_data:eq}\end{eqnarray} 
and the conditions  from Corollary 30 of \cite{darboux} (ensuring that $L\cap \bar L=\{ 0\}$) are satisfied.\

In this section we will express the data $(W, \mathcal D,  \sigma, \varepsilon )$ in terms of the 
components $J, A, B, C, \mu, \nu$ introduced in Lemma \ref{comp:lem}. 

\begin{thm} \label{comp_to_data:thm}Let $\mathcal J$ be a generalised almost complex structure  on 
$E$ with  components $J, A, B, C, \mu, \nu$ and $(1,0)$-bundle $L$. 
We assume that  $\pi (L)$, $B$ and $\nu$ have constant rank. 
Then the following data define the $(1,0)$ bundle $L=L(W, \mathcal D,  \sigma, \varepsilon )$ of $\mathcal J$:
\begin{enumerate}
\item 
\begin{eqnarray} W =  \mathrm{Rg} \, (\mathrm{Id} -i J) \oplus i\mathrm{Rg}\,  B \oplus i (\mathrm{Rg}\, \nu^*)_{0}, \label{Wdecomp:eq}
\end{eqnarray} 
where $(\mathrm{Rg}\, \nu^*)_{0}$ is a complement of $(\mathrm{Rg}\,  B) \cap \mathrm{Rg}\, \nu^*$ in 
 $\mathrm{Rg}\, \nu^*$;
 \item 
 \[ \mathcal D = \{ (\mathrm{Id}-iA)r -i \nu \xi \mid \xi \in T^*M,\; r\in \mathcal G,\; B\xi = \nu^* r\};\]
 \item The homomorphism $\sigma : W \to \mathcal{G}^{\mathbb{C}}$ is given by 
 \[ \sigma (X)= \frac12 (\sigma_1(X)-i\sigma_1(iX)),\quad X\in W,\]
 where,  in terms of the decomposition \eqref{Wdecomp:eq} of $W$, $\sigma_1  : W \to \mathcal{G}^{\mathbb{C}}$ is 
 given by 
\begin{eqnarray}
\nonumber\sigma_1 (X-iJX) &=& -i \mu X\\
\nonumber\sigma_1 (iv) &=& i\nu B^{-1} v\\
\label{sigma-1-def}\sigma_1 (iw) &=& (\mathrm{Id}-iA)(\nu^*)^{-1}w
\end{eqnarray} 
for all $X\in TM$, $v \in  \mathrm{Rg}\,  (B)$ and $w\in \mathrm{Rg}\,  (\nu )_0$. Here 
$B^{-1}$ stands for the inverse of the isomorphism $B : \mathcal C_B \to \mathrm{Rg}\, B\subset TM$, 
where  $\mathcal C_B \subset T^*M$ is complementary to $\ker B$. Similarly,
$(\nu^*)^{-1}$ is the inverse of the isomorphism $\nu^* : \mathcal C_{\nu^*} \to  \mathrm{Rg}\, \nu^*\subset TM$, where 
$\mathcal C_{\nu^*}\subset \mathcal G$ is complementary to $\ker \nu^*$.
\item  The $2$-form $\epsilon \in \Gamma  ({\bigwedge}^{2} W^{*})$ is given by
\begin{align}
\nonumber \epsilon (X, Y) & = \frac{1}{4} \left( \eta (X, Y) - i  \eta (iX , Y) \right) \\
\label{epsilon}& +\frac{1}{8} \langle \sigma_{1} (X) - i \sigma_{1} (iX) ,
\sigma_{1} (Y) - i \sigma_{1} (iY) \rangle ,
\end{align}
for any $X, Y\in W$, where $\eta \in  \Gamma ( W^{*} \otimes_{\mathbb{R}}  ( T^{*}M )^{\mathbb{C}}) = \Gamma\,  
(\mathrm{Hom}_{\mathbb{R}} (W, ( T^{*}M )^{\mathbb{C}})) $ is given by
\begin{align}
\nonumber& \eta (X- i JX) = - i CX\\
\nonumber& \eta ( iv) = - (\mathrm{Id}  + i J^{*} ) B^{-1} v\\
\nonumber& \eta (i w) = i \mu^{*} ( \nu^{*} )^{-1} w
\end{align}
for any $X\in TM$,  $v\in \mathrm{Rg}\, B$,  $w\in (\mathrm{Rg}\, \nu^*)_0$ and    $\eta (X, Y) := \eta (X) (Y)$ for any $X, Y$.
\end{enumerate} 
\end{thm}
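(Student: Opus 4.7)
The plan is to read off all four pieces of data directly from the matrix description \eqref{form-J} of $\mathcal{J}$. For $v = X + \xi + r \in E^{\mathbb{C}}$, a short calculation gives the components of $v - i\mathcal{J}v$ in $(TM)^{\mathbb{C}}$, $(T^*M)^{\mathbb{C}}$ and $\mathcal{G}^{\mathbb{C}}$ as
\[
X - iJX - iB\xi + i\nu^* r,\quad (\mathrm{Id} + iJ^*)\xi - iCX + i\mu^* r,\quad (\mathrm{Id} - iA)r - i\mu X - i\nu\xi.
\]
I will use the first line to extract $W$: projecting it and letting $v$ range over $E$ gives $W = \mathrm{Rg}\,(\mathrm{Id} - iJ) + i\,\mathrm{Rg}\, B + i\,\mathrm{Rg}\, \nu^*$, and a choice of complement $(\mathrm{Rg}\,\nu^*)_0$ refines this into the real direct sum \eqref{Wdecomp:eq}. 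Likewise, the kernel of $\pi|_L$ is cut out by $X = 0$ and $B\xi = \nu^* r$, and reading the $\mathcal G^{\mathbb C}$-slot on this kernel yields exactly the claimed description of $\mathcal{D}$.

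For $\sigma$ and $\varepsilon$ the strategy is to select a canonical real-linear lift $\ell: W \to L$ piecewise on the three summands of \eqref{Wdecomp:eq} and read off its $\mathcal{G}^{\mathbb{C}}$- and $(T^*M)^{\mathbb{C}}$-components. The lifts are the simplest possible: take $v = Y_0 \in TM$ for $Y_0 - iJY_0$, $v = -\xi_0$ with $\xi_0 \in \mathcal{C}_B$ for $iB\xi_0$, and $v = r_0$ with $r_0 \in \mathcal{C}_{\nu^*}$ for $i\nu^* r_0$. Substituting into the component formulas above directly produces the stated values of $\sigma_1$ and $\eta$. These are only $\mathbb{R}$-linear, so I will then pass to the $\mathbb{C}$-linear $\sigma$ and $\mathbb{C}$-bilinear-in-$X$ map $\varepsilon$ by the standard holomorphic-part projection $f \mapsto \tfrac12(f - i f\circ i)$; this is exactly the content of items (3) and (4). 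The required $\mathbb{C}$-(bi-)linearity is then automatic from the construction.

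The remaining task is to verify that the quadruple $(W, \mathcal{D}, \sigma, \varepsilon)$ so defined recovers the bundle $L$ via \eqref{L_data:eq}. Since the lift $\ell(X) = X + \eta(X) + \sigma_1(X)$ reparametrises as $X + \xi + \sigma(X) + r$ with $\xi = \eta(X)$ and $r = \sigma_1(X) - \sigma(X) = \tfrac12(\sigma_1(X) + i\sigma_1(iX))$, two things must be checked: first, that this $r$ lies in $\mathcal{D}$; and second, that the identity $\eta(X)(Y) = 2\varepsilon(X,Y) - \langle\sigma(Y), \sigma(X) + 2r\rangle$ holds. The second point is an algebraic manipulation: extracting the holomorphic-in-$X$ part of both sides and substituting $\sigma(X) = \tfrac12(\sigma_1(X) - i\sigma_1(iX))$ reproduces exactly the symmetrised expression for $\varepsilon$ in (4).

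The main obstacle is the inclusion $\tfrac12(\sigma_1(X) + i\sigma_1(iX)) \in \mathcal{D}$, which is where the algebraic identities of Lemma~\ref{comp:lem} become essential. One proceeds by case analysis on the three summands of $W$: expanding $iX$ in the splitting \eqref{Wdecomp:eq} requires \eqref{J^2:eq}, \eqref{JB:eq} and \eqref{A2:eq} on the respective summands, and recombining the resulting pieces into the form $(\mathrm{Id} - iA)\tilde r - i\nu\tilde\xi$ with $B\tilde\xi = \nu^*\tilde r$ then invokes \eqref{omegaB:eq}, \eqref{five:eq} and \eqref{six:eq}. For instance, on $\mathrm{Rg}(\mathrm{Id} - iJ)$ the identity \eqref{six:eq} bundles the terms $-i\mu Y_0 + \mu JY_0 + \nu CY_0$ into $-i(\mathrm{Id} - iA)\mu Y_0$, after which the contributions coming from the $B$- and $\nu^*$-summands of the decomposition of $iX$ assemble into the required form. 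The remaining summands are handled by analogous manipulations, after which the two descriptions of $L$ agree and the theorem follows.
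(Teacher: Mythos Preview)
Your approach is essentially the same as the paper's: both compute $L$ from \eqref{form-J}, read off $W$ and $\mathcal D$ by projecting, build the same $\mathbb R$-linear lift $\tilde\sigma_1=\ell:W\to L$ on the three summands of \eqref{Wdecomp:eq}, and then pass to the $\mathbb C$-linear $\tilde\sigma$, $\sigma$, $\varepsilon$ via $X\mapsto \tfrac12(\,\cdot\,(X)-i\,\cdot\,(iX))$. The difference lies only in how the final identification $L=L(W,\mathcal D,\sigma,\varepsilon)$ is justified.

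You propose to verify it directly, and identify as the ``main obstacle'' the inclusion $r=\tfrac12(\sigma_1(X)+i\sigma_1(iX))\in\mathcal D$, to be checked case by case using the quadratic identities \eqref{J^2:eq}--\eqref{six:eq}. This works, but it is unnecessary: since $\ell(X)$ and $\ell(iX)$ both lie in $L$ by construction, so does $\ell(X)+i\ell(iX)$, and it projects to $0$ under $\pi$; hence its $\mathcal G^{\mathbb C}$-component $\sigma_1(X)+i\sigma_1(iX)$ lies in $\pi_{\mathcal G}(\ker\pi|_L)=\mathcal D$ by the very definition of $\mathcal D$. Equivalently, $\tilde\sigma(X)=\tfrac12(\ell(X)-i\ell(iX))$ is a $\mathbb C$-linear section of $\pi|_L$ whose $\mathcal G^{\mathbb C}$-part is $\sigma(X)$ with $r=0$. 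The paper exploits exactly this: rather than re-deriving the defining relation for $\xi$, it observes that $\sigma(Z)\in\mathcal D_Z:=\pi_{\mathcal G}(\pi|_L)^{-1}(Z)$ and then invokes Proposition~21 of \cite{darboux} (any such $\sigma$ works, with $\varepsilon$ uniquely determined) together with the general formula of Lemma~\ref{compute-e} for $\varepsilon$. Your route is self-contained but does redundant work; the paper's route is shorter but imports structure theory from \cite{darboux}. In either case the algebraic identities of Lemma~\ref{comp:lem} are not needed at this stage---they are already encoded in the fact that $\ell(W)\subset L$.
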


\begin{proof}
The $(1,0)$ bundle $L= \{ v -i\mathcal Jv \mid v\in E \}$ of $\mathcal J$ is given by 
\begin{equation} \label{L_in_comp:eq}L = \left\{ \left. \left( \begin{array}{c} X- i (JX+B\xi -\nu^* r)\\
\xi -i (CX-J^*\xi -\mu^*r)\\
r-i(\mu X +\nu \xi + Ar) 
\end{array}\right) \right| X\in TM,\;\xi \in T^*M,\; r\in \mathcal G\right\}. \end{equation}
Projecting $L$ to $(TM)^{\mathbb{C}}$ we obtain 
\begin{equation}
W= \mathrm{Rg} \, (\mathrm{Id} -i J) \oplus i (\mathrm{Rg}\,  B + \mathrm{Rg}\, \nu^*),
\end{equation} 
which implies  (\ref{Wdecomp:eq}).  
Similarly, $\mathcal D$ is obtained by projecting $\ker \pi|_L$ to $\mathcal G^{\mathbb{C}}$, see \eqref{D:eq}. 
Recall \cite{darboux} that while $W$ and $\mathcal D$ are completely determined by $L$, the homomorphism 
$\sigma$ is unique only up to addition of $\gamma  \in \Gamma (\mathrm{Hom}(W,  \mathcal D))$ and the $2$-form 
$\varepsilon\in \Gamma (\bigwedge^2W^*)$ is uniquely determined once $\sigma$ is fixed. Due to Proposition 21 of 
\cite{darboux} and its proof, it is sufficient to check that $\sigma (Z) \in \mathcal D_Z := \pi_{\mathcal G} (\pi|_L)^{-1}(Z)$
for any $Z\in W$. To check this we begin by defining an $\mathbb R$-linear map $\tilde \sigma_1 : W \to L$ such that 
$\pi \tilde \sigma_1 (X) =X$ for all $X\in W$: 
\begin{eqnarray*}
\tilde \sigma_1 (X-iJX) &=& X-iJX -iCX -i \mu X\\
\tilde \sigma_1 (iv) &=& iv-B^{-1}v-iJ^*B^{-1}v+i\nu B^{-1} v\\
\tilde\sigma_1 (iw) &=& iw + i\mu^* (\nu^*)^{-1}w+ (\mathrm{Id}-iA)(\nu^*)^{-1}w, 
\end{eqnarray*} 
for any $X\in TM$, $v\in \mathrm{Rg}\, B$ and $w\in  (\mathrm{Rg}\,  \nu^{*} )_{0}$, compare \eqref{L_in_comp:eq}. Next we define a $\mathbb{C}$-linear section of $\pi|_L : L \to W$ by 
\[ \tilde{\sigma}(X) := \frac12 (\tilde\sigma_1 (X) -i\tilde\sigma_1(iX)),\quad X\in W.\]
The formulas for $\sigma_1$ and $\sigma$ are obtained by projecting $\tilde \sigma_1$ and $\tilde \sigma$ to 
$\mathcal G^{\mathbb{C}}$.  To compute $\varepsilon$ determined by $\sigma$ we remark 
 that  $\eta  (X) = \pi_{T^{*} M} \tilde{\sigma}_{1} (X)$ for any $X\in W$  and use 
the following general lemma.

\begin{lem}\label{compute-e} 
Let $\mathcal J$ be a generalised almost complex structure on $E$ with  $(1,0)$-bundle 
$L = L(W, \mathcal D , \sigma , \epsilon )$ such that $W = \pi (L)$ has constant rank. 
Then 
\begin{equation} \label{epsilon:eq}\varepsilon (X,Y) = \frac12 ((\pi_{T^*M} \tilde \sigma X)(Y)+\langle \sigma (X),\sigma (Y)\rangle),\quad X,Y \in W \end{equation}
for any complex linear section $\tilde \sigma$ of $\pi|_L$ such that $\pi_{\mathcal G}\circ \tilde \sigma = \sigma$. 
\end{lem}
\begin{proof} Note first that the right-hand side of \eqref{epsilon:eq} is independent of the choice of $\tilde \sigma$, since any two
choices differ by a $\mathbb{C}$-linear map $\lambda : W \to W^0$ with values in the annihilator $W^0  \subset (T^*M)^{\mathbb{C}}$ of $W$.
In fact, as a difference of two sections with the same $\mathcal G^{\mathbb{C}}$-projection, $\lambda$ takes values in 
$L\cap (T^*M)^{\mathbb{C}}$ which coincides with $W^0$.
From \eqref{L_data:eq} we see that 
\begin{equation}\label{star}
\varepsilon (X,Y)  = \frac12 (\xi (Y) +\langle \sigma (Y),\sigma (X) +2r\rangle )
\end{equation}
which is independent of the choice of 
$\xi$ and $r$ such that
$u=X+\xi +\sigma ( X) + r \in (\pi\vert_{L})^{-1}(X)$.  In fact, any two choices of $\xi + r$ differ by an element of 
\[ L\cap ((T^*M)^{\mathbb{C}} \oplus \mathcal D)= W^0 \oplus \mathcal D.\]
In particular,
we can take  $u:= \tilde \sigma (X)$. Since  $\pi_{\mathcal G} \circ \tilde \sigma = \sigma$, we deduce that $r=0$.  Also,  $\xi = \pi_{T^*M} \tilde \sigma (X)$.
Relation (\ref{star}) implies  \eqref{epsilon:eq}.
\end{proof} 
This finishes the proof of Theorem \ref{comp_to_data:thm}.
\end{proof}

\begin{rem} {\rm 
Note that when $B$ is surjective, the expression 
(\ref{L_in_comp:eq})
implies that the projection $\pi|_L : L \to (TM)^\mathbb{C}$ 
is also surjective and hence $\pi (L)$ has  automatically constant  rank as assumed in Theorem \ref{comp_to_data:thm}. 
The theorem then holds with $(\mathrm{Rg}\, \nu^*)_{0}=0$ without requiring the assumption that $\nu$ has constant rank. 
This setting will be studied in the next section.}
\end{rem}

\section{Non-degenerate generalised complex  structures}
\label{nondegGCS:sec}
In this section we define  a class of generalised complex structures called non-degenerate, for which
the equations in Lemma  \ref{comp:lem} can be completely solved.  We describe 
them  in Proposition~\ref{intBsymp:thm}
and  we classify them  up to isomorphisms in Theorem \ref{class_nondeg:thm}. 
When the Courant algebroid is exact and untwisted,
Proposition \ref{intBsymp:thm} reduces  to the description of generalised complex structures in terms of
Hitchin pairs (see \cite{crainic}).\

Let $E = \mathbb{T}M\oplus \mathcal G$ be a standard Courant algebroid with defining data $(\nabla , R, H)$ and quadratic 
Lie algebra bundle $(\mathcal G , [\cdot ,\cdot ]_{\mathcal G} , \langle \cdot , \cdot \rangle_{\mathcal G}).$ 

\begin{defn}\label{non-deg} A generalised almost complex structure $\mathcal J$   on  $E$ is called 
{\cmssl non-degenerate} if $B= \pi \circ \mathcal J|_{T^*M} : T^*M \to TM$ is bijective. 
\end{defn}

\begin{rem}{\rm
Lemma \ref{Binv:lemma} below shows that the property of being non-degenerate can be defined 
(by means of dissections) 
for generalised complex structures on arbitrary transitive Courant algebroids (not necessarily in standard form).}  
\end{rem}

\begin{prop}\label{nondeg-alg} There is a natural bijection between non-degenerate generalised almost complex structures $\mathcal J$   on 
$E$ and 
data $J\in \Gamma (\mathrm{End}\, TM)$, $\tilde A\in \Gamma (\mathrm{End}\, \mathcal G)$, $B: T^*M \to TM$ bijective, $\nu : T^*M \to \mathcal G$, 
subject to the skew-symmetry conditions 
\[ B^* =-B,\; \tilde A^* =-\tilde A\] 
and the quadratic equations
\begin{eqnarray}
&&\tilde A^2= -\mathrm{Id}\label{cxstrA:eq}\\
&&JB-BJ^*=\nu^*\nu .\label{nunustar:eq}
\end{eqnarray}
The components $A, C$ and $\mu$  in Lemma \ref{comp:lem}  are given by
\begin{eqnarray} 
\nonumber A&=& \tilde A -\nu B^{-1}\nu^*\\
\nonumber C&=& -B^{-1}(\nu^*\tilde A \nu B^{-1} + \mathrm{Id}) -J^*B^{-1}J\label{C:eq}\\
\label{comp-prime}\mu &=& \nu B^{-1}J - \tilde A \nu B^{-1}
.\end{eqnarray}
\end{prop}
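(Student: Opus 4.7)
The plan is to exploit the bijectivity of $B$ to eliminate the components $\mu$ and $C$ from the data, and to introduce the change of variable $\tilde A := A + \nu B^{-1}\nu^*$ so that the quadratic constraint involving $A$ simplifies to the clean condition $\tilde A^2 = -\mathrm{Id}$.

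\textbf{Forward direction.}  Assume $(J, A, B, C, \mu, \nu)$ are the components of a non-degenerate generalised almost complex structure.  Set $\tilde A := A + \nu B^{-1}\nu^*$.  Skew-symmetry $\tilde A^* = -\tilde A$ is immediate from $A^* = -A$ and $(B^{-1})^* = -B^{-1}$, the latter being a consequence of $B^* = -B$.  Equation \eqref{nunustar:eq} coincides with \eqref{JB:eq}.  Solving \eqref{omegaB:eq} for $\mu$ gives $\mu = (\nu J^* - A\nu)B^{-1}$; substituting $\nu J^* = \nu B^{-1}JB - \nu B^{-1}\nu^*\nu$ from \eqref{JB:eq} and collecting terms yields $\mu = \nu B^{-1}J - \tilde A\,\nu B^{-1}$.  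The formula for $C$ then follows by solving \eqref{J^2:eq} for $C = B^{-1}(-\mathrm{Id} - J^2 + \nu^*\mu)$ and substituting.  Finally, plugging $A = \tilde A - \nu B^{-1}\nu^*$ together with the computed formulas for $\mu$ and $\mu^*$ into \eqref{A2:eq}, all cross terms between $\tilde A$ and $\nu B^{-1}\nu^*$ as well as the quartic term in $\nu B^{-1}\nu^*$ cancel, leaving $\tilde A^2 = -\mathrm{Id}$.

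\textbf{Backward direction.}  Given $(J, \tilde A, B, \nu)$ with $B$ bijective, $B^* = -B$, $\tilde A^* = -\tilde A$, $\tilde A^2 = -\mathrm{Id}$, and \eqref{nunustar:eq}, define $A, \mu, C$ by the stated formulas.  Skew-symmetries $A^* = -A$ and $C^* = -C$ follow by a direct computation with adjoints using $(B^{-1})^* = -B^{-1}$ and $\tilde A^* = -\tilde A$.  Equation \eqref{JB:eq} coincides with \eqref{nunustar:eq}, and the remaining equations \eqref{omegaB:eq}, \eqref{J^2:eq}, \eqref{A2:eq}, \eqref{five:eq}, \eqref{six:eq} are verified by direct substitution, using $\tilde A^2 = -\mathrm{Id}$ and \eqref{nunustar:eq} (typically in the form $B^{-1}JB = J^* + B^{-1}\nu^*\nu$).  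Alternatively, \eqref{five:eq} can be obtained from \eqref{JB:eq} by invoking the self-adjointness of $\mathcal J^2 + \mathrm{Id}$, which pairs the $(2,1)$ and $(1,2)$ block entries.

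The main obstacle, although purely computational, is organizing the adjoint manipulations and carrying out the cancellations leading to $\tilde A^2 = -\mathrm{Id}$ in equation \eqref{A2:eq}.  The key identities $(B^{-1})^* = -B^{-1}$ and $B^{-1}JB - J^* = B^{-1}\nu^*\nu$ underlie essentially every step, and the correspondence is then manifestly bijective at the level of algebraic data.
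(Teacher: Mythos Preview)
Your proposal is correct and follows essentially the same route as the paper: solve \eqref{omegaB:eq} for $\mu$ and \eqref{J^2:eq} for $C$, introduce $\tilde A = A + \nu B^{-1}\nu^*$, identify \eqref{JB:eq} with \eqref{nunustar:eq}, and reduce \eqref{A2:eq} to $\tilde A^2=-\mathrm{Id}$; the paper likewise asserts that \eqref{five:eq} and \eqref{six:eq} then follow. Your exposition is in fact slightly more explicit (e.g.\ the substitution $\nu J^* = \nu B^{-1}JB - \nu B^{-1}\nu^*\nu$), and the alternative derivation of \eqref{five:eq} from the self-adjointness of $\mathcal J^2+\mathrm{Id}$ is a nice observation not present in the paper.
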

\begin{proof} From \eqref{omegaB:eq} we obtain
\begin{equation}\label{add-0}
\mu = (\nu J^* -A\nu )B^{-1}
\end{equation}
and then from \eqref{J^2:eq}  
\begin{align} 
\nonumber C&=-B^{-1}+B^{-1}\nu^*\mu -B^{-1}J^2\\
\label{add-1}& =-B^{-1} +B^{-1}\nu^*(\nu J^* -A\nu )B^{-1}-B^{-1}J^2. 
\end{align}
The remaining  quadratic equations 
in Lemma \ref{comp:lem} reduce to \eqref{cxstrA:eq} and \eqref{nunustar:eq}.  More precisely, \eqref{cxstrA:eq}
is equivalent to \eqref{A2:eq}, \eqref{nunustar:eq}  coincides with  (\ref{JB:eq})
and the remaining equations \eqref{five:eq} and \eqref{six:eq} follow. Writing in 
(\ref{add-0}) and (\ref{add-1})
$A$ in terms of $\tilde A$ and using \eqref{nunustar:eq} several times we arrive at the last two equations in  \eqref{C:eq}. 
The skew-symmetry equations for $A, B, C$ reduce to those for $B$ and $\tilde{A}$.
\end{proof}

In order to investigate  the integrability of $\mathcal J$ we define 
a modified connection on $\mathcal G$ by 
\begin{equation}\label{mod-conn}
\tilde \nabla_X := \nabla_X + \mathrm{ad}_{\nu B^{-1}X},\quad X\in TM, 
\end{equation}
where $\mathrm{ad}:\mathcal G \rightarrow \mathrm{Der}_{\mathrm{sk}} (\mathcal G )$ denotes the adjoint representation of $\mathcal G$,
and a $2$-form $B_J\in \Omega^2(M)$ by 
\begin{equation}\label{B-J}
B_J(X,Y) := \frac12 (J \cdot B^{-1})(X,Y)= -\frac12 (B^{-1}(JX,Y)+B^{-1}(X,JY)).
\end{equation}

\begin{prop} \label{intBsymp:thm}Let $\mathcal J$ be a non-degenerate generalised almost complex structure determined by  the data $J$, $\tilde A$, $B$ and $\nu$ as in Proposition~\ref{nondeg-alg}.  Then  $\mathcal J$
is integrable if and only if the following conditions hold: 
\begin{enumerate}
\item $B^{-1}$ is a symplectic form. 
\item $\tilde A\in \Gamma (\mathrm{End}\, \mathcal G)$ is an  integrable 
complex structure, i.e.\ for any $p\in M$ the $(1,0)$-eigenspace of $\tilde A_p$
is closed under the Lie bracket of  $\mathcal G_p$. 
\item $\tilde \nabla \tilde A=0$. 
\item The covariant derivative of the $\mathcal G$-valued $1$-form $\nu B^{-1}$ satisfies 
\begin{equation}\label{r}
d^\nabla (\nu B^{-1}) + [\nu B^{-1}, \nu B^{-1}]_{\mathcal G} + R=0.
\end{equation}
\item The exterior derivative of the $2$-form $B_J$ satisfies 
\begin{equation} \label{h}
dB_J + H + \langle (R-\frac13  [\nu B^{-1}, \nu B^{-1}]_{\mathcal G})\wedge \nu B^{-1}\rangle_{\mathcal G} =0.
\end{equation}
\end{enumerate}
Above, the $\mathcal G$-valued $2$-form $[\nu B^{-1}, \nu B^{-1}]_{\mathcal G}$ is given by 
\[ [\nu B^{-1}, \nu B^{-1}]_{\mathcal G}(X,Y) := [\nu B^{-1}X, \nu B^{-1}Y]_{\mathcal G},\quad X,Y\in TM. \]
\end{prop}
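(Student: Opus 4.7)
The strategy I would follow is to \emph{normalise} $\mathcal{J}$ by a Courant algebroid isomorphism, reducing its analysis to that of a block-diagonal generalised almost complex structure $\mathcal{J}_{\omega} \oplus \tilde{A}$ on a transformed Courant algebroid whose defining data simplify dramatically. Set $\Phi := \nu B^{-1} \in \Omega^{1}(M, \mathcal{G})$ and $\beta := -B_{J} \in \Omega^{2}(M)$, and apply the construction from the last paragraph of Section \ref{prelim:sec} to the triple $(K, \Phi, \beta) := (\mathrm{Id}_{\mathcal{G}}, \Phi, \beta)$. This yields a new standard Courant algebroid $E'$ on the same underlying vector bundle $\mathbb{T}M \oplus \mathcal{G}$, with defining data $(\nabla', R', H')$ prescribed by (\ref{def-cond}), together with an explicit isomorphism $I : E \to E'$ given by (\ref{def-iso}). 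A direct Leibniz-rule computation identifies $\nabla'$ with the modified connection $\tilde{\nabla}$ of (\ref{mod-conn}), and the remaining part of (\ref{def-cond}), after using ad-invariance of $\langle \cdot , \cdot\rangle_{\mathcal{G}}$ and the graded antisymmetry of $[\Phi, \Phi]_{\mathcal{G}}$, shows that conditions (\ref{r}) and (\ref{h}) are precisely equivalent to $R' = 0$ and $H' = 0$.

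The next step is to verify that $I$ conjugates $\mathcal{J}$ into the block-diagonal form
\[
I \circ \mathcal{J} \circ I^{-1} = \mathcal{J}_{-B^{-1}} \oplus \tilde{A}.
\]
This is a finite algebraic check: starting from the matrix form (\ref{form-J}) and inserting the expressions of Proposition~\ref{nondeg-alg} for $A, C, \mu$, the $\Phi$-twist of $I$ cancels the $\nu$, $\mu$, $-\nu^{*}$, $-\mu^{*}$ off-diagonal blocks together with the $\nu B^{-1}\nu^{*}$ correction inside $A$, while the $\beta$-twist absorbs the $J^{*}B^{-1}J$ summand in $C$, leaving exactly $\mathcal{J}_{\omega}$ on $\mathbb{T}M$ (with $\omega = -B^{-1}$) and $\tilde{A}$ on $\mathcal{G}$.

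Because the Nijenhuis tensor is natural under Courant algebroid isomorphisms, $\mathcal{J}$ is integrable on $E$ if and only if $\mathcal{J}_{\omega} \oplus \tilde{A}$ is integrable on $E' = (\mathbb{T}M \oplus \mathcal{G}, \tilde{\nabla}, R', H')$. The $(1,0)$-bundle of the latter is the direct sum $L_{\omega} \oplus \mathcal{G}^{1,0}$, with $\mathcal{G}^{1,0}$ the $+i$-eigenbundle of $\tilde{A}$; since $\omega$ is non-degenerate, $L_{\omega} \cap (T^{*}M)^{\mathbb{C}} = 0$. Evaluating the Dorfman bracket (\ref{dorfman1}) on pairs of sections drawn from $L_{\omega} \oplus \mathcal{G}^{1,0}$ and using this triviality, closure of $L_{\omega} \oplus \mathcal{G}^{1,0}$ decomposes into: the $\mathbb{T}M$-bracket reduces to the exact, untwisted case (since $H' = 0$) and gives $d\omega = 0$, i.e.\ condition (1); the mixed bracket $[X, r]$ forces the $R'$-contribution $-2\langle i_{X}R', r\rangle_{\mathcal{G}}$ to vanish (already ensured by $R' = 0$) and then reads $\tilde{\nabla}_{X} r \in \mathcal{G}^{1,0}$, giving condition (3); the pure $\mathcal{G}$-bracket $[r_{1}, r_{2}]_{\mathcal{G}}$ must remain in $\mathcal{G}^{1,0}$, giving condition (2); and the extra $T^{*}M$-term $2\langle \tilde{\nabla} r_{1}, r_{2}\rangle$ in $[r_{1}, r_{2}]$ vanishes automatically once $\tilde{\nabla}\tilde{A} = 0$.

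The main obstacle is the unwinding of (\ref{def-cond}) needed to isolate the Chern--Simons coefficient $-1/3$ appearing in (\ref{h}). Concretely, one must expand $\langle (KR + R') \wedge \Phi\rangle_{\mathcal{G}} + c_{3}$, substitute $R' = R - d^{\nabla}\Phi - [\Phi, \Phi]_{\mathcal{G}}$, and use ad-invariance together with the cyclic identity for $c_{3}$ to rewrite $c_{3} = \tfrac{1}{3}\langle [\Phi, \Phi]_{\mathcal{G}} \wedge \Phi\rangle_{\mathcal{G}}$ before collecting terms. Past this combinatorial hurdle, the remaining steps are either straightforward linear algebra (the block-diagonal check for $I\circ \mathcal{J} \circ I^{-1}$) or immediate from the definitions.
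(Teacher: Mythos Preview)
Your approach is valid but genuinely different from the paper's. The paper computes the data $(W,\mathcal D,\sigma,\varepsilon)$ of Theorem~\ref{comp_to_data:thm} in the non-degenerate case (finding $W=(TM)^{\mathbb C}$, $\mathcal D=\mathrm{graph}(-i\tilde A)$, $\sigma=\nu B^{-1}$, $\varepsilon=\tfrac12(B_J+iB^{-1})$) and then invokes the integrability criterion of \cite[Proposition~35]{darboux} for these data; the five conditions drop out by reading off real and imaginary parts. Your route instead conjugates $\mathcal J$ by a single Courant isomorphism to the block-diagonal form $\mathcal J_\omega\oplus\tilde A$ on a transformed algebroid $E'$ and analyses integrability there by direct Dorfman-bracket computation. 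This is essentially the mechanism the paper deploys \emph{later}, in the proof of Theorem~\ref{class_nondeg:thm} (and Lemma~\ref{Binv:lemma}), but there it is used \emph{after} Proposition~\ref{intBsymp:thm} has been established; you are running that argument in reverse to prove the proposition itself. The payoff is that your argument is self-contained and does not appeal to the external criterion from \cite{darboux}; the cost is the explicit bracket analysis on $E'$ and the block-diagonal verification, which the paper avoids.

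Two points require tightening. First, the direction of the isomorphism: with $(K,\Phi,\beta)=(\mathrm{Id},\nu B^{-1},\ast)$ and the conventions of \eqref{def-cond}, one obtains $\nabla'=\tilde\nabla$ only when $E$ plays the role of $E_2$ and the new algebroid is $E_1$ (as in the paper's proof of Theorem~\ref{class_nondeg:thm}); your phrasing ``$I:E\to E'$'' suggests the opposite and would give $\nabla'=\nabla-\mathrm{ad}_{\nu B^{-1}}$. Second, the logic of the bracket analysis is circular as written: you invoke ``since $H'=0$'' and ``already ensured by $R'=0$'' before these have been extracted from closure of $L_\omega\oplus\mathcal G^{1,0}$. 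The clean statement is that closure on $E'$ (with unrestricted $R',H'$) is \emph{equivalent} to the five conditions $d\omega=0$, $H'=0$, $R'=0$, $\tilde\nabla\tilde A=0$, and integrability of $\tilde A$: the $[L_\omega,L_\omega]$-component forces $H'=0$ (real part), $d\omega=0$ (imaginary part), and $R'\in\Gamma(\Lambda^2T^*M\otimes\mathcal G^{1,0})$, hence $R'=0$ by reality; the mixed bracket then gives $\tilde\nabla\tilde A=0$; and the $\mathcal G^{1,0}$-bracket gives integrability of $\tilde A$ (the residual $2\langle\tilde\nabla r_1,r_2\rangle$ term vanishing by isotropy of $\mathcal G^{1,0}$ once $\tilde\nabla\tilde A=0$). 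With these adjustments your strategy goes through.
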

\begin{proof}  We apply  Theorem \ref{comp_to_data:thm} to the special case when $B$ is invertible. 
Then   $W=(TM)^{\mathbb{C}}$,  $\mathcal D = \mathrm{graph}\, (-i\tilde A)$
and we claim that 
\begin{equation}\label{sigma-prel}
\sigma = \frac{1}{2} (  \mathrm{Id} + i \tilde{A} ) \nu B^{-1}.
\end{equation}
Relation  (\ref{sigma-prel}) follows from the next computation:  for any $X\in TM$ , 
\begin{align}
\nonumber&  2\sigma (X) = \sigma_{1} (X - i JX) +\sigma_{1} (iJX) - i\sigma_{1} ( iX)\\
\nonumber& = - i\mu X + i \nu B^{-1} JX + \nu B^{-1} X\\
\nonumber& = - i( \nu B^{-1} JX - \tilde{A} \nu B^{-1} X) +  i\nu B^{-1} JX + \nu B^{-1} X,\\
\nonumber& =  \nu B^{-1} X + i \tilde{A} \nu B^{-1} X,
\end{align}
where we used the definition  (\ref{sigma-1-def}) of $\sigma_{1}$ and the expression of $\mu$ given by the third relation in (\ref{comp-prime}).\

Recall now that $\sigma$ 
can be modified by addition of an arbitrary form $\gamma \in \Gamma ( \mathrm{Hom} (W, \mathcal D)).$ Since 
$\mathcal D = \mathrm{graph}\, (-i\tilde A)$, we can  (and will) replace $\sigma$ by the new homomorphism (again denoted by $\sigma$):
\begin{equation}\label{newsigma}
\sigma = \nu B^{-1} .
\end{equation}
To compute $\epsilon \in \Gamma ( {\bigwedge}^{2} (T^{*}M )^{\mathbb{C}})$ determined by  $\sigma$ we employ Lemma \ref{compute-e}, 
with $\tilde{\sigma} : (TM)^{\mathbb{C}}  \rightarrow L$ given by
\[ 
\tilde{\sigma }( X) = X +  i  B^{-1}X- J^{*} B^{-1} X +\nu B^{-1} X.
\] 
Let us check that $\tilde{\sigma}(X)\in L$ for all $X\in TM$, and thus for all $X\in (TM)^{\mathbb{C}}$.
From (\ref{L_in_comp:eq}) we see that $\tilde{\sigma }( X)\in L$ if and only if the pair $(\xi , r) := (- J^{*} B^{-1} X, \nu B^{-1}X)$ solves there the system 
\begin{eqnarray*}JX+B\xi -\nu^* r&=&0\\
CX-J^*\xi -\mu^*r &=& -B^{-1}X\\
\mu X +\nu \xi + Ar&=&0.
\end{eqnarray*}
Substituting the expression for $(\xi ,r)$ into these equations we see that they reduce to \eqref{JB:eq},  
the adjoint of \eqref{J^2:eq} and \eqref{add-0}, respectively. This proves  that $\tilde \sigma (X)\in L$.
Applying Lemma \ref{compute-e} and using  \eqref{nunustar:eq} now shows  that
\begin{equation}
\epsilon =\frac12 (B_J +i B^{-1}).
\end{equation}
By \cite[Proposition 35]{darboux} $\mathcal J$ is integrable if and only if
\begin{enumerate}
\item $\mathcal D \subset \mathcal G^{\mathbb{C}}$ is a bundle of subalgebras,
\item $\tilde\nabla$ preserves $\mathcal D$,
\item $R + d^\nabla \sigma + [\sigma, \sigma]_{\mathcal G}=0$
(because $\sigma =\nu B^{-1}$ and $R$ are real and $\mathcal D \cap \mathcal G=0$),
\item  \begin{equation} \label{2depsilon:eq}2d\varepsilon + H + \langle (d^\nabla \sigma ) \wedge \sigma\rangle_{\mathcal G} + 2 \langle R\wedge \sigma \rangle_{\mathcal G}  
+ 2 \langle [\sigma , \sigma ]_{\mathcal G},\sigma \rangle_{\mathcal G} =0,\end{equation} 
where 
\[ \langle [\sigma , \sigma ]_{\mathcal G},\sigma \rangle_{\mathcal G} (X,Y,Z) = \langle [\sigma (X) , \sigma (Y)]_{\mathcal G}, \sigma (Z)\rangle_{\mathcal G}, \quad X,Y,Z\in TM .\] 
\end{enumerate}
These conditions imply directly the conditions 2.-4.\ in the statement of Proposition  \ref{intBsymp:thm}. The remaining condition 1.\ follows by taking 
the imaginary part of \eqref{2depsilon:eq}  (or by using  Proposition \ref{poisson:prop} and the non-degeneracy of $B$)   
while condition 5.\ follows by taking the real part  of \eqref{2depsilon:eq} 
and rewriting it using that $d^\nabla \sigma = - R -[\sigma , \sigma ]_{\mathcal G}$, 
$\langle [\sigma , \sigma ]_{\mathcal G},\sigma \rangle_{\mathcal G} =
\frac13 \langle [\sigma , \sigma ]_{\mathcal G}\wedge \sigma \rangle_{\mathcal G}$ and $\sigma = \nu B^{-1}$.
\end{proof}

\begin{cor} Under the assumptions of Proposition \ref{intBsymp:thm}, the connection $\tilde \nabla$ is flat.
\end{cor}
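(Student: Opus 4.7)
The plan is to compute the curvature $R^{\tilde\nabla}$ directly from the definition $\tilde\nabla_X = \nabla_X + \mathrm{ad}_{\nu B^{-1}X}$ and observe that the result is exactly $\mathrm{ad}$ applied to the left-hand side of equation \eqref{r}, which vanishes by hypothesis.

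In more detail, I would first recall the standard formula for the curvature of a connection modified by an $\mathrm{End}(\mathcal G)$-valued one-form $\omega_X := \mathrm{ad}_{\nu B^{-1}X}$, namely
\[
R^{\tilde\nabla}(X,Y) \;=\; R^\nabla(X,Y) \;+\; (d^\nabla \omega)(X,Y) \;+\; [\omega_X,\omega_Y],
\]
where $[\omega_X,\omega_Y]$ denotes the commutator of endomorphisms of $\mathcal G$. Then I would evaluate each term separately, making essential use of the fact (part of the data in \eqref{def-data}) that $\nabla$ preserves the Lie bracket $[\cdot,\cdot]_{\mathcal G}$, so that $\nabla_X \mathrm{ad}_a = \mathrm{ad}_{\nabla_X a}$ and $R^\nabla(X,Y) = \mathrm{ad}_{R(X,Y)}$.

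Applying these identities, together with the Jacobi identity in the form $[\mathrm{ad}_a,\mathrm{ad}_b] = \mathrm{ad}_{[a,b]_{\mathcal G}}$, yields
\[
(d^\nabla\omega)(X,Y) = \mathrm{ad}_{d^\nabla(\nu B^{-1})(X,Y)}, \qquad [\omega_X,\omega_Y] = \mathrm{ad}_{[\nu B^{-1},\nu B^{-1}]_{\mathcal G}(X,Y)},
\]
so that
\[
R^{\tilde\nabla}(X,Y) \;=\; \mathrm{ad}_{\bigl(R + d^\nabla(\nu B^{-1}) + [\nu B^{-1},\nu B^{-1}]_{\mathcal G}\bigr)(X,Y)}.
\]
Condition 4.\ of Proposition \ref{intBsymp:thm}, i.e.\ equation \eqref{r}, asserts precisely that the $\mathcal G$-valued $2$-form inside $\mathrm{ad}$ vanishes identically, so $R^{\tilde\nabla} = 0$.

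There is no substantive obstacle here beyond bookkeeping: the only things to check carefully are the sign and combinatorial conventions entering $d^\nabla(\nu B^{-1})$ and $[\nu B^{-1},\nu B^{-1}]_{\mathcal G}$, which are fixed by the notational convention recalled in the opening \emph{Notation} block and by the explicit definition of $[\nu B^{-1},\nu B^{-1}]_{\mathcal G}$ given at the end of Proposition \ref{intBsymp:thm}. Once these match the conventions implicit in the statement of \eqref{r}, the corollary follows in one line.
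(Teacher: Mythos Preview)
Your proposal is correct and follows exactly the same approach as the paper's proof, which simply records $R^{\tilde\nabla} = \mathrm{ad}_{R + d^\nabla(\nu B^{-1}) + [\nu B^{-1},\nu B^{-1}]_{\mathcal G}} = 0$ in one line. Your version just unpacks this computation more explicitly.
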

\begin{proof}Recall that $R^\nabla =\mathrm{ad}_R$, which implies that 
\[ R^{\tilde \nabla}= \mathrm{ad}_{R+d^\nabla (\nu B^{-1}) +[\nu B^{-1}, \nu B^{-1}]_{\mathcal G}}=0.\]
\end{proof}

We now arrive at the main result from this section.

\begin{thm}\label{class_nondeg:thm}
Any non-degenerate generalised complex structure $\mathcal J$ on a transitive Courant algebroid is isomorphic (via Courant algebroid isomorphisms)
to a generalised complex structure  $\mathcal J_{\mathrm{can}} : E_{\mathrm{can}}\rightarrow
E_{\mathrm{can}}$, where
$$
E_{\mathrm{can}} = \mathbb{T}M\oplus
\mathcal G_{\mathrm{can}}
$$
is an untwisted Courant algebroid with quadratic Lie algebra bundle $\mathcal G_{\mathrm{can}} $ and (flat) connection  $\nabla^{\mathrm{can}}$, and
\begin{equation}\label{J-A}
\mathcal J_{\mathrm{can}} = \mathcal J_{\omega} \oplus A
\end{equation}
is the direct sum of a
complex structure  of symplectic type $\mathcal J_{\omega}$  on $M$ and  a
$\nabla^{\mathrm{can}}$-parallel  skew-symmetric  field
$A\in \Gamma ( \mathrm{End}\, \mathcal G_{\mathrm{can}} )$
of integrable complex structures on  $\mathcal G .$ 
Moreover, the symplectic structure $\omega$ is determined by the isomorphism type of $\mathcal J$   and
 its diffeomorphism type depends only on the equivalence class  (under equivalences of  Courant algebroids) of $\mathcal J$.
\end{thm}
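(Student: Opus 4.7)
First I would encode $\mathcal J$ using Proposition~\ref{intBsymp:thm}: non-degeneracy and integrability mean $\mathcal J$ is captured by a symplectic form $\omega := -B^{-1}$, a fiberwise integrable, $\tilde\nabla$-parallel, skew-symmetric complex structure $\tilde A$ on $\mathcal G$, and the $\mathcal G$-valued $1$-form $\sigma := \nu B^{-1}$, satisfying the compatibility equations~(\ref{r}) and~(\ref{h}). The key observation is that $\sigma$ and $B_J := \tfrac12 J\cdot B^{-1}$ look exactly like the ``pure gauge'' parameters in the isomorphism formula~(\ref{def-iso}), and that the integrability conditions~(\ref{r}),~(\ref{h}) are precisely the obstructions to gauging them away.

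Second, I would construct an isomorphism $I : E \to E_{\mathrm{can}}$ of Courant algebroids of the form~(\ref{def-iso}) with $K = \mathrm{Id}$, $\Phi = -\sigma$, and $\beta \in \Omega^2(M)$ chosen so that the third equation in~(\ref{def-cond}) forces $H^{\mathrm{can}} = 0$. With these choices, the first two equations in~(\ref{def-cond}) give $\nabla^{\mathrm{can}} = \nabla + \mathrm{ad}_\sigma = \tilde\nabla$ and $R^{\mathrm{can}} = R + d^\nabla \sigma + [\sigma,\sigma]_{\mathcal G} = 0$ by condition~4 of Proposition~\ref{intBsymp:thm}; the required exactness of the $H$-correction is exactly condition~5 of Proposition~\ref{intBsymp:thm}, which is what makes $\beta$ exist globally, not merely locally as in the Darboux theorem of~\cite{darboux}. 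In particular $\nabla^{\mathrm{can}}$ is flat, matching the corollary above.

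Third, I would verify the block-diagonal form of $\mathcal J^{\mathrm{can}} := I\mathcal J I^{-1}$. Applying $I$ in the form $I(r) = -2\Phi^* r + r$ to the $\mathcal G$-part of~(\ref{form-J}) and using the expressions of $A$, $C$, $\mu$ in terms of $\tilde A$, $J$, $B$, $\nu$ from~(\ref{C:eq}), the choice $\Phi = -\sigma$ makes the transformed $\nu^{\mathrm{can}}$ and hence $\mu^{\mathrm{can}}$ vanish. The $\beta$-shift of the $TM$-part then moves $J$ into $C$, and since $B$ is invertible the quadratic relations~\eqref{J^2:eq},~\eqref{JB:eq} force the transformed $\mathbb{T}M$-block to be exactly $\mathcal J_\omega$. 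The $\mathcal G$-block is left with $A^{\mathrm{can}} := \tilde A$, which is fiberwise integrable (condition~2) and $\nabla^{\mathrm{can}}$-parallel (condition~3), yielding $\mathcal J_{\mathrm{can}} = \mathcal J_\omega \oplus A^{\mathrm{can}}$.

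Finally, for the uniqueness assertions I would note that the isomorphism~(\ref{def-iso}) fixes $T^*M$ pointwise, so the $TM$-component $B$ of $\mathcal J|_{T^*M}$ is invariant under isomorphisms of standard Courant algebroids, and in particular is independent of the dissection. Hence $\omega = -B^{-1}$ is an invariant of the isomorphism class of $\mathcal J$. A general equivalence differs from an isomorphism by a base diffeomorphism $\varphi$, under which $\omega$ pulls back; so only its diffeomorphism type is an invariant of the equivalence class. I expect the main obstacle to lie in the third step: the block-by-block computation of $I\mathcal J I^{-1}$ uses all six components in Lemma~\ref{comp:lem}, every quadratic relation of that lemma, and the explicit formulas of Proposition~\ref{nondeg-alg}, so the sign conventions between $\sigma$, $B_J$, and the isomorphism data must be handled with care.
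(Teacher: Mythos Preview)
Your overall strategy coincides with the paper's: gauge away $\nu$ and untwist the Courant algebroid by an isomorphism of type~(\ref{def-iso}) with $\Phi=\pm\nu B^{-1}$, using conditions~4 and~5 of Proposition~\ref{intBsymp:thm} to see that $R^{\mathrm{can}}=0$ and $H^{\mathrm{can}}=0$. The invariance argument for $\omega$ is also the same (via Lemma~\ref{Binv:lemma}, which records that $B_1=B$).

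There is, however, a genuine gap in your third step. You fix $\beta$ by the single requirement $H^{\mathrm{can}}=0$ (``the required exactness of the $H$-correction is exactly condition~5'') and then assert that the same $\beta$-shift ``moves $J$ into $C$'', i.e.\ forces $J^{\mathrm{can}}=0$. This is not automatic. With the natural choice dictated by~(\ref{h}), namely $\beta=B_J$ (in the paper's convention $I_E:E_{\mathrm{can}}\to E$ with $\Phi=\nu B^{-1}$), a short computation using~(\ref{J1:eq}) and~(\ref{nunustar:eq}) gives
\[
J_1 \;=\; J + B\,B_J \;=\; \tfrac12\bigl(J - BJ^*B^{-1}\bigr) \;=\; \tfrac12\,\nu^*\nu\,B^{-1},
\]
which is nonzero whenever $\nu\neq 0$. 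So after your single isomorphism the $\mathbb{T}M$-block is \emph{not} yet $\mathcal J_\omega$, and the quadratic relations~(\ref{J^2:eq}),~(\ref{JB:eq}) alone cannot force $J_1=0$.

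The paper closes this gap by a second step: once $\nu_1=0$ and the algebroid is untwisted, relation~(\ref{h}) (now with $R=H=\nu=0$) shows that the new $B_{J_1}=-B^{-1}J_1$ is \emph{closed}. A further automorphism with $(K,\Phi,\beta)=(\mathrm{Id},0,B_{J_1})$ then kills $J_1$ via~(\ref{J1:eq}) while keeping $H^{\mathrm{can}}=0$. Equivalently, you can do everything in one step, but then $\beta$ must be $B_J$ \emph{plus a specific closed $2$-form}, and you need to explain why that extra piece is closed---this is precisely the content of the paper's second step. Your proposal does not supply this argument.
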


\begin{proof} Let 
$I_{E} : E_{1} \rightarrow E_{2}$  be an isomorphism between standard Courant algebroids $E_{i}$, 
with quadratic Lie algebra bundles $(\mathcal G_{i}, 
[\cdot , \cdot ]_{\mathcal G_{i}}, \langle \cdot  , \cdot \rangle_{\mathcal G_{i}})$
and data $(\nabla^{i}, R_{i}, H_{i}).$  Assume that $I_{E}$ is 
defined by $(K, \Phi , \beta )$ (see relations (\ref{def-iso})) and  let 
$\mathcal J$ be a generalised almost complex structure on $E_{2}$, given by (\ref{form-J}).  

\begin{lem}\label{Binv:lemma}The $(J,A,B,\nu )$-components of the transformed generalised complex structure 
$\mathcal J_1=I_{E}^{-1} \circ \mathcal J \circ I_{E}$ are given by 
\begin{eqnarray}
J_{1} &=& J + B ( \beta -\Phi^{*}\Phi ) -\nu^{*} \Phi\label{J1:eq}\\
A_1 &=& K^{-1}\left( A + 2\Phi B\Phi^*-2\nu \Phi^* + \Phi \nu^* \right) K\\ 
B_{1} &=& B\\
 \nu_{1} &=& K^{-1} (  \nu -\Phi B  ) \label{nu1:eq} ,
\end{eqnarray}
where in the first relation $\beta$ is seen as a map from $TM$ to $T^{*}M$. 
In particular, $\mathcal J_1$ is non-degenerate if and only  if $\mathcal J$ is and, 
in that case, has the same underlying symplectic structure.
\end{lem}
\begin{proof}All  components of $\mathcal J_1$ can be obtained by a straightforward computation from (\ref{def-iso}) and (\ref{form-J}). 
For brevity, we have only listed a subset which fully encodes the generalised complex structure in the non-degenerate case. 
\end{proof}

Assume now that $E_{i}$ have the same quadratic Lie algebra bundle, which, for simplicity, will be denoted by 
$(\mathcal G , [\cdot , \cdot ]_{\mathcal G}, \langle \cdot , \cdot \rangle_{\mathcal G})$, 
and that 
$\mathcal J $ is  integrable and non-degenerate. Let $\tilde{\nabla } = \tilde\nabla^2$  be the modified connection  (\ref{mod-conn}) 
and $B_{J}$ the $2$-form defined by (\ref{B-J}), associated to $\mathcal J .$   Let 
$(K = \mathrm{Id},  \Phi  = \nu B^{-1}, \beta = B_{J})$. 
From relation (\ref{r})  combined with  the second relation in (\ref{def-cond}),  we obtain that
$R_{1} =0$. From the definition of $\tilde{\nabla}$  and  the first relation in (\ref{def-cond}),  we obtain that 
$\nabla^{1} = \tilde{\nabla }$. From relation (\ref{h}) 
combined with  the third relation in (\ref{def-cond}) and 
$$
c_{3} = \frac{1}{3} \langle [ \Phi , \Phi ]_{\mathcal G}\wedge \Phi \rangle_{\mathcal G}
$$
we obtain that $H_{1} =0.$ Finally, \eqref{nu1:eq} implies  that $\nu_{1}  =0.$\

We have proved that up to Courant algebroid isomorphisms,  any non-degenerate generalised complex structure  
$\mathcal J$ is defined on an untwisted Courant algebroid $E_{\mathrm{can}}$ and  its   $\nu$-component   is trivial
($\nu =0$).  In particular, 
the corresponding $2$-form $B_{J}$ is closed (from relation (\ref{h}) again) and 
$B_{J} = - B^{-1} \circ J$, in virtue of \eqref{nunustar:eq}.  
Let $I_{E_{\mathrm{can}}} \in \mathrm{Aut} (E_{\mathrm{can}})$ be the automorphism defined by
$( K =\mathrm{Id},  \Phi =0, B_{J})$. 
It is an easy check that 
$I_{E_{\mathrm{can}}}^{-1} \circ \mathcal J \circ I_{E_{\mathrm{can}}}$
is of the form (\ref{J-A}) with the symplectic form $\omega =C=-B^{-1}$.
Indeed, its  $J$-component is trivial,  in virtue of \eqref{J1:eq},
while its $\nu$-component  coincides with the  $\nu$-component  of $\mathcal J$, that is, it is trivial as well.
The statements that $A$ is integrable and $\nabla^{\mathrm{can}}$-parallel follow from the second and third conditions in Proposition
 \ref{intBsymp:thm}.
 The invariance of the symplectic structure $\omega = -B^{-1}$ under isomorphisms of Courant algebroids follows from Lemma~\ref{Binv:lemma}.
  In order to prove  the invariance of its diffeomorphism type under equivalences, it is sufficient to write any  equivalence 
 $\mathcal E : E_{1} \rightarrow E_{2}$ between two  untwisted Courant algebroids $E_{i}  = \mathbb{T}M \oplus \mathcal G_{i}$
 with quadratic Lie algebra bundles  $(\mathcal G_{i} , [\cdot , \cdot ]_{\mathcal G_{i}},
 \langle \cdot , \cdot \rangle_{\mathcal G_{i}})$ and connections $\nabla^{i}$, 
 which covers a diffeomorphism $f\in \mathrm{Diff} (M)$,  as a composition  $I\circ \mathcal E_{f}$, where $I : (f^{-1})^{!} E_{1} \rightarrow 
 E_{2}$ is an isomorphism and $\mathcal E_{f} : E_{1}  \rightarrow (f^{-1})^{!} E_{1}$  is an equivalence which covers $f$, defined by
 $$
 \mathcal E_{f} (X_{p}+ \xi_{p} + r_{p}) := (d_{p} f)(X_{p}) + \xi_{p}\circ (d_{p}f)^{-1} + r_{p},
 $$
 for any $X_{p}\in T_{p} M,\ \xi_{p} \in T^{*}_{p}M,\ r_{p} \in (\mathcal G_{1})_{p}$
(where in the right hand side $r_{p}$ is seen as a vector from $((f^{-1})^{*}  \mathcal G_{1} )_{f(p)} =( \mathcal G_{1})_{p}$). 
 Above we denoted by $(f^{-1})^{!} E_{1}$ the pullback Courant algebroid by the diffeomorphism $f^{-1}$, 
 which is an untwisted Courant algebroid with quadratic Lie algebra bundle  and connection the pullback  by $f^{-1}$ of $(\mathcal G_{1} , [\cdot , \cdot ]_{\mathcal G_{1}},
 \langle \cdot , \cdot \rangle_{\mathcal G_{1}})$ and $\nabla^{1}$ respectively
 (see \cite{bland}  for the definition of pullback Courant algebroid  and \cite[Section 4.2]{cortes-david-JSG} for more details  when the Courant algebroid is in standard form). 
Since $\mathcal E = I\circ \mathcal E_{f}$ and $I$ preserves the symplectic structure underlying any non-degenerate complex structure, while 
$\mathcal E_{f}$ preserves  its diffeomorphism type, 
the last statement  of the theorem follows.
 \end{proof}

Using the Riemann-Hilbert correspondence relating flat vector bundles and representations of the fundamental group, see e.g.\ \cite[Proposition 1.2.5]{K}  (and  \cite[Theorem 2.9]{morita} for the analogous  statement in the setting of principal bundles) 
we
reformulate Theorem
\ref{class_nondeg:thm} as follows.
(Below,  by an isomorphism $\phi : (J_\mathfrak{g}, \rho) \to (J_\mathfrak{g}', \rho ')$  between two pairs 
$(J_\mathfrak{g}, \rho)$ and  $(J_\mathfrak{g}', \rho ')$
we mean an automorphism of $(\mathfrak g , \langle \cdot , \cdot \rangle_{\mathfrak{g}})$ such that $\phi \circ J_\mathfrak{g}=J_\mathfrak{g}'\circ \phi$ and 
$\rho' = \phi \circ \rho$).

\begin{thm} \label{classif:cor} Let $(M, \omega )$ be a  symplectic manifold and  $(\mathfrak g , \langle \cdot , \cdot \rangle_{\mathfrak{g}})$ a quadratic Lie algebra.
There is a natural bijection between, on the one hand, isomorphism classes  of non-degenerate generalised complex structures with underlying symplectic structure $\omega$ on transitive Courant algebroids $E$ over $M$ with quadratic Lie algebra bundle of fiber type $(\mathfrak g , \langle \cdot , \cdot \rangle_{\mathfrak{g}})$ and, on the other hand,  isomorphism classes  of pairs $(J_\mathfrak{g}, \rho )$, where $J_\mathfrak{g}$ 
is a skew-symmetric integrable complex structure on $\mathfrak g$ and 
$\rho : \pi_1(M) \to \mathrm{Aut}(\mathfrak{g},\langle \cdot , \cdot \rangle_{\mathfrak{g}}, J_\mathfrak{g})$ is a representation by $J_{\mathfrak{g}}$-linear automorphisms of the quadratic Lie algebra. \end{thm}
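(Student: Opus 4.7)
The plan is to deduce this classification by composing Theorem~\ref{class_nondeg:thm} with the Riemann--Hilbert correspondence for flat vector bundles equipped with parallel tensorial structures, and then check bijectivity on isomorphism classes.

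For the forward direction, given a non-degenerate generalised complex structure $\mathcal J$ with underlying symplectic form $\omega$, I would invoke Theorem~\ref{class_nondeg:thm} to produce an isomorphic model $(E_{\mathrm{can}}, \mathcal J_{\mathrm{can}} = \mathcal J_\omega \oplus A)$ where $E_{\mathrm{can}} = \mathbb{T}M \oplus \mathcal G_{\mathrm{can}}$ is untwisted with flat connection $\nabla^{\mathrm{can}}$ and $A$ is a $\nabla^{\mathrm{can}}$-parallel skew-symmetric field of integrable complex structures on $\mathcal G_{\mathrm{can}}$. Fixing a basepoint $p_0\in M$ together with a quadratic Lie algebra identification $(\mathcal G_{\mathrm{can}})_{p_0}\cong (\mathfrak g,\langle\cdot,\cdot\rangle_{\mathfrak g})$, parallel transport yields a holonomy representation $\rho:\pi_1(M,p_0)\to \mathrm{Aut}(\mathfrak g,\langle\cdot,\cdot\rangle_{\mathfrak g})$, and $A_{p_0}$ transports to a skew-symmetric integrable complex structure $J_{\mathfrak g}$ on $\mathfrak g$. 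Parallelism of $A$ forces $\rho$ to take values in $\mathrm{Aut}(\mathfrak g,\langle\cdot,\cdot\rangle_{\mathfrak g},J_{\mathfrak g})$.

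For the reverse direction, given a pair $(J_{\mathfrak g},\rho)$, I would form the associated flat vector bundle $\mathcal G:=\widetilde M\times_\rho \mathfrak g$ over $M$. It inherits a parallel quadratic Lie algebra structure from $\mathfrak g$ and a parallel complex structure $A$ from $J_{\mathfrak g}$, the latter being well defined precisely because $\rho$ preserves $J_{\mathfrak g}$. Declaring $R=0$, $H=0$ and taking the induced flat connection, one obtains an untwisted standard Courant algebroid $E:=\mathbb{T}M\oplus \mathcal G$; setting $\mathcal J:=\mathcal J_\omega\oplus A$ and checking the five conditions of Proposition~\ref{intBsymp:thm} in the case $\nu=0$, $R=0$, $H=0$ gives a non-degenerate integrable generalised complex structure with underlying symplectic form $\omega$.

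The main obstacle is bijectivity on isomorphism classes, which reduces to a rigidity statement for the canonical form. For well-definedness of the forward map, one must verify that $(J_{\mathfrak g},\rho)$ is independent, up to isomorphism of pairs, of the basepoint, the fiber identification, and the isomorphism to a canonical model produced by Theorem~\ref{class_nondeg:thm}; a change of identification conjugates $\rho$ and transports $J_{\mathfrak g}$ by the same element of $\mathrm{Aut}(\mathfrak g,\langle\cdot,\cdot\rangle_{\mathfrak g})$, yielding an isomorphic pair. The essential step is to analyse Courant algebroid isomorphisms between two canonical models $\mathcal J_\omega\oplus A_1$ and $\mathcal J_\omega\oplus A_2$ using the parametrisation $(K,\Phi,\beta)$ of \eqref{def-iso}: Lemma~\ref{Binv:lemma} shows that $B$ and the $\nu$-component (which vanishes on both sides) are preserved, and combining these with $J_1=J_2=0$ via \eqref{J1:eq} and \eqref{nu1:eq} forces $\Phi=0$; the conditions \eqref{def-cond} then reduce to $R_1=R_2=0$, $H_1=H_2=0$ being preserved and $K$ intertwining the two flat structures and the complex structures $A_i$. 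Via Riemann--Hilbert, such a $K$ corresponds to an element of $\mathrm{Aut}(\mathfrak g,\langle\cdot,\cdot\rangle_{\mathfrak g},J_{\mathfrak g})$ conjugating $\rho_1$ to $\rho_2$, giving an isomorphism of pairs. Functoriality of the Riemann--Hilbert correspondence then closes the loop and establishes the bijection.
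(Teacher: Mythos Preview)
Your proposal is correct and follows essentially the same approach as the paper: both directions are built from Theorem~\ref{class_nondeg:thm} together with the Riemann--Hilbert correspondence, and the bijectivity on isomorphism classes is obtained by applying Lemma~\ref{Binv:lemma} and \eqref{def-cond} to isomorphisms between canonical models. The only small refinement is that, once \eqref{nu1:eq} with $\nu=\nu_1=0$ and $B$ invertible gives $\Phi=0$, equation \eqref{J1:eq} with $J=J_1=0$ then forces $\beta=0$ as well (the paper states this explicitly), so that the isomorphism is encoded purely by the parallel $K$.
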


\begin{proof}
Given a pair $(J_\mathfrak{g}, \rho )$ as above, we consider the flat bundle of quadratic Lie algebras $\mathcal G_{\mathrm{can}} :=\widetilde M\times_{\pi_1(M)} \mathfrak g$ with  its
canonical flat connection  $\nabla^{\mathrm{can}}$. The invariance of $J_{\mathfrak g}$ under $\rho (\pi_1(M))$ implies that $\mathcal G_{\mathrm{can}}$ carries a corresponding
$\nabla^{\mathrm{can}}$-parallel endomorphism field $A$, which is fiber-wise a skew-symmetric integrable complex structure. Then
$\mathcal J_{\mathrm{can}} = \mathcal J_{\omega} \oplus A$ defines
a non-degenerate generalised complex structure on  the untwisted Courant algebroid  $E_{\mathrm{can}}= \mathbb{b}TM \oplus  \mathcal G_{\mathrm{can}}$ determined by $\nabla^{\mathrm{can}}$ and $\mathcal G_{\mathrm{can}}.$  Conversely, it follows from Theorem \ref{class_nondeg:thm} (using the Riemann-Hilbert correspondence) that any non-degenerate generalised complex structure on a transitive Courant algebroid with symplectic form $\omega$ and fiber type
$(\mathfrak g , \langle \cdot , \cdot \rangle_{\mathfrak{g}})$ is isomorphic to one of the above form. 

 The fact that the above map $(J_\mathfrak{g}, \rho ) \mapsto
 \mathcal J_{\mathrm{can}}$ induces a bijection on isomorphism classes follows by first observing
 from  Lemma \ref{Binv:lemma}  (by setting $J_{1} = J =0$ and $\nu_{1} = \nu =0$) 
 and relations 
(\ref{def-cond}),  that any isomorphism $I$ 
 between two non-degenerate generalised complex structures $\mathcal J$, $\mathcal J'$ of the form \eqref{J-A} with corresponding endomorphisms $A, A'$ and 
 flat bundles $(\mathcal G, \nabla)$, $(\mathcal G',\nabla')$  is necessarily encoded by $(K, \Phi , \beta )$ with $\Phi=0$, $\beta=0$ and 
 $K\in \mathrm{Isom}(\mathcal G, \mathcal G')$  a parallel section mapping $A$ to $A'$,  
and then applying the Riemann-Hilbert correspondence between 
 isomorphism classes of flat vector bundles and isomorphism classes of representations of the fundamental group. Note that under this correspondence 
 $\nabla$-parallel sections correspond to tensors invariant under the holonomy representation $\rho$ of the flat connection $\nabla$.
In particular, $K$ defines an isomorphism between the pairs $(J_{\mathfrak{g}}, \rho )$ and $(J'_{\mathfrak{g}}, \rho^{'} )$.
\end{proof}

We now state various consequences of Theorems \ref{class_nondeg:thm}  and \ref{classif:cor}.  From Theorem \ref{classif:cor} we obtain:

\begin{cor}\label{reduction} Let $(M, \omega )$ and $(\tilde{M}, \tilde{\omega })$ be two symplectic manifolds with isomorphic fundamental groups. 
There is a  bijective correspondence between  isomorphism classes of non-degenerate generalised complex structures on transitive Courant algebroids  over $M$ and $\tilde{M}$, with underlying symplectic forms $\omega$ and $\tilde{\omega}$ respectively.
\end{cor}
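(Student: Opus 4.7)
The plan is to reduce the statement directly to Theorem \ref{classif:cor}, which reformulates the classification of non-degenerate generalised complex structures in purely algebraic terms depending only on the fundamental group and the fiber type of the quadratic Lie algebra bundle, with no further reference to the underlying manifold or symplectic form.

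First, I would fix (arbitrarily) a group isomorphism $\varphi : \pi_1(M) \xrightarrow{\sim} \pi_1(\tilde{M})$. For every isomorphism class of quadratic Lie algebras $(\mathfrak{g}, \langle \cdot, \cdot \rangle_{\mathfrak{g}})$, Theorem \ref{classif:cor} provides bijections between isomorphism classes of non-degenerate generalised complex structures with underlying symplectic form $\omega$ (respectively $\tilde{\omega}$) and fiber type $(\mathfrak{g}, \langle \cdot, \cdot \rangle_{\mathfrak{g}})$ on $M$ (respectively $\tilde{M}$), and isomorphism classes of pairs $(J_{\mathfrak{g}}, \rho)$ with $\rho$ a representation of $\pi_1(M)$ (respectively $\pi_1(\tilde{M})$) by $J_{\mathfrak{g}}$-linear automorphisms of $(\mathfrak{g}, \langle \cdot, \cdot \rangle_{\mathfrak{g}})$.

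Next, the map $(J_{\mathfrak{g}}, \rho) \mapsto (J_{\mathfrak{g}}, \rho \circ \varphi^{-1})$ is plainly a bijection between these two sets of pairs. Because the equivalence relation on pairs only involves automorphisms of $(\mathfrak{g}, \langle \cdot, \cdot \rangle_{\mathfrak{g}})$ commuting with $J_{\mathfrak{g}}$ and intertwining the representations, this relation is insensitive to which fundamental group parametrises the representations, and consequently the map descends to a bijection on isomorphism classes. Taking the disjoint union over all isomorphism classes of quadratic Lie algebras then produces the bijection asserted in the corollary.

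The argument is essentially formal once Theorem \ref{classif:cor} is available, so there is no genuine obstacle. The only mild subtlety worth recording is that the resulting bijection depends on the auxiliary choices of $\varphi$ and of base points for $M$ and $\tilde{M}$; however, the corollary asserts only the existence of a bijection, so these dependencies are harmless.
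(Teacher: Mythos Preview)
Your proposal is correct and follows the same route as the paper, which simply records that the corollary is an immediate consequence of Theorem~\ref{classif:cor}. Your explicit discussion of the bijection $(J_{\mathfrak{g}},\rho)\mapsto (J_{\mathfrak{g}},\rho\circ\varphi^{-1})$ and the remark about the dependence on the choice of $\varphi$ and base points are a helpful elaboration of what the paper leaves implicit.
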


\begin{exa}{\rm
Let $(M, \omega )$ be a compact  symplectic manifold with a Hamiltonian action of a compact
Lie group, such that the symplectic reduction  $(M_{\mathrm{red}}, \omega_{\mathrm{red}})$ at any coadjoint orbit in the image of the moment map  is smooth.
Then the fundamental groups of $M$ and   $M_{\mathrm{red}}$ are isomorphic
\cite[Theorems 1.2 and 1.3]{hui}   and Corollary \ref{reduction} can be applied to $(M, \omega )$ and $(M_{\mathrm{red}}, \omega_{\mathrm{red}})$.}
\end{exa}

\begin{cor}\label{consequences} Let $E$ be a transitive Courant algebroid  over $M$ with quadratic Lie algebra bundle $\mathcal G$ and fiber type
$(\mathfrak{g}, \langle \cdot , \cdot \rangle_{\mathfrak{g}}).$ Assume that $E$ admits a non-degenerate generalised complex structure. Let $E_{\mathrm{can}}$ be
the untwisted  Courant algebroid 
from Theorem \ref{class_nondeg:thm} 
and $\rho : \pi_{1}(M)\rightarrow \mathrm{Aut} ( \mathfrak{g}, \langle \cdot , \cdot
\rangle_{\mathfrak{g}}, J_{\mathfrak{g}}) $ the representation from  Theorem \ref{classif:cor}.\

i) Then $\mathfrak{g}$ admits a skew-symmetric  (integrable)  complex structure.\

ii) If  $\rho$ is trivial (i.e.\ $\rho (\pi_{1}(M)) = \mathrm{Id}_{\mathfrak{g}}$)  then $\mathcal G$ is trivial (as a vector bundle).\

iii)  If $\mathfrak{g} = \mathfrak{h}\oplus \mathfrak{c}$ is the direct sum of a Lie algebra $\mathfrak{h}$ without center and an abelian  Lie algebra $\mathfrak{c}$
(in particular,   if $\mathfrak{g}$ is reductive), 
then $E_{\mathrm{can}}$ is heterotic if and only if  $\mathrm{Im}\,  (\rho )\subset\mathrm{Int} (\mathfrak{g}).$  
\end{cor}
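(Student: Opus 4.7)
The plan is to deduce all three assertions from Theorems~\ref{class_nondeg:thm} and~\ref{classif:cor}, which identify $E$ up to Courant algebroid isomorphism with $E_{\mathrm{can}}$, encoded by the pair $(J_{\mathfrak{g}}, \rho)$. Part (i) is then essentially immediate: Theorem~\ref{classif:cor} already produces $J_{\mathfrak{g}}$ as a skew-symmetric integrable complex structure on $\mathfrak{g}$ commuting with the action of $\rho$. For Part (ii), we would observe that a Courant algebroid isomorphism $E \cong E_{\mathrm{can}}$ restricts in particular to an isomorphism of quadratic Lie algebra bundles $\mathcal{G} \cong \mathcal{G}_{\mathrm{can}}$, and that by the Riemann-Hilbert correspondence used in the proof of Theorem~\ref{classif:cor} one has $\mathcal{G}_{\mathrm{can}} \cong \widetilde{M} \times_{\rho} \mathfrak{g}$; triviality of $\rho$ collapses this to $M \times \mathfrak{g}$, which is trivial as a vector bundle.

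For Part (iii), we first handle the more direct reverse direction. Assuming $E_{\mathrm{can}}$ is heterotic, write $\mathcal{G}_{\mathrm{can}} = \mathrm{Ad}(P)$ for a principal $G$-bundle $P$ with connected structure group $G$ and let $A$ be a principal connection on $P$ inducing $\nabla^{\mathrm{can}}$. Since parallel transport on $\mathrm{Ad}(P)$ equals $\mathrm{Ad}$ applied to parallel transport on $P$, the holonomy of $\nabla^{\mathrm{can}}$ lies in $\mathrm{Ad}(G) = \mathrm{Int}(\mathfrak{g})$; the Riemann-Hilbert correspondence then gives $\mathrm{Im}(\rho) \subset \mathrm{Int}(\mathfrak{g})$. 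For the forward direction, we exploit the decomposition $\mathfrak{g} = \mathfrak{h} \oplus \mathfrak{c}$ by introducing $G := \mathrm{Int}(\mathfrak{h}) \times C$, where $C$ is any connected Lie group with Lie algebra $\mathfrak{c}$ (for instance $\mathbb{R}^{\dim \mathfrak{c}}$). Since $\mathfrak{h}$ is center-free, $\mathrm{Int}(\mathfrak{h})$ has Lie algebra $\mathfrak{h}$, so $G$ has Lie algebra $\mathfrak{g}$; moreover, $\mathrm{Ad}$ maps the factor $\mathrm{Int}(\mathfrak{h}) \times \{1\}$ isomorphically onto $\mathrm{Int}(\mathfrak{g})$. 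Under the hypothesis $\mathrm{Im}(\rho) \subset \mathrm{Int}(\mathfrak{g})$, we lift $\rho$ through $\mathrm{Ad}$ to a homomorphism $\tilde{\rho}: \pi_1(M) \to G$ and form the flat principal bundle $P := \widetilde{M} \times_{\tilde{\rho}} G$. A direct computation identifies $\mathrm{Ad}(P) = P \times_G \mathfrak{g} = \widetilde{M} \times_\rho \mathfrak{g} \cong \mathcal{G}_{\mathrm{can}}$, with the induced flat connection matching $\nabla^{\mathrm{can}}$, exhibiting $E_{\mathrm{can}}$ as heterotic.

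The main obstacle is the forward direction of (iii). For a general quadratic Lie algebra $\mathfrak{g}$, lifting $\rho: \pi_1(M) \to \mathrm{Int}(\mathfrak{g})$ through $\mathrm{Ad}: G \to \mathrm{Int}(\mathfrak{g})$ is obstructed by the central extension $Z(G) \hookrightarrow G \to \mathrm{Int}(\mathfrak{g})$, and such a lift $\tilde{\rho}$ need not exist. The hypothesis $\mathfrak{g} = \mathfrak{h} \oplus \mathfrak{c}$ with $\mathfrak{h}$ center-free is precisely what furnishes the product model $G = \mathrm{Int}(\mathfrak{h}) \times C$, for which $\mathrm{Ad}$ admits a canonical section $\mathrm{Int}(\mathfrak{g}) \hookrightarrow G$ and the lifting obstruction disappears; this is where the specific algebraic assumption enters the argument in an essential way.
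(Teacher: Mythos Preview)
Your proposal is correct and matches the paper's proof for parts (i), (ii), and the easier direction of (iii). For the harder direction of (iii) (showing that $\mathrm{Im}(\rho)\subset\mathrm{Int}(\mathfrak g)$ implies $E_{\mathrm{can}}$ is heterotic), you take a slightly more streamlined route than the paper. The paper first decomposes $\rho=\rho_{\mathfrak h}\oplus\rho_{\mathfrak c}$, reduces to the two separate subcases $\mathfrak g=\mathfrak h$ (centerless) and $\mathfrak g=\mathfrak c$ (abelian), and in the centerless case obtains the principal bundle via a holonomy reduction of the frame bundle of $\mathcal G_{\mathrm{can}}$ to $\mathrm{Ad}_G\cong G$. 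You instead package both subcases at once by introducing the single group $G=\mathrm{Int}(\mathfrak h)\times C$, observing that $\mathrm{Ad}^G$ restricted to the first factor is an isomorphism onto $\mathrm{Int}(\mathfrak g)$ (so that $\rho$ lifts canonically to $\tilde\rho:\pi_1(M)\to G$), and appealing to the Riemann--Hilbert correspondence for flat principal bundles to produce $P=\widetilde M\times_{\tilde\rho}G$. This is the same idea in a tidier wrapping, and your closing remark on the central-extension obstruction explains transparently why the algebraic hypothesis on $\mathfrak g$ is needed. One minor wording point in (ii): a Courant algebroid isomorphism does not literally ``restrict'' to a map $\mathcal G\to\mathcal G_{\mathrm{can}}$ (compare \eqref{def-iso}); what actually happens is that any such isomorphism between standard Courant algebroids is encoded by data $(K,\Phi,\beta)$ whose component $K$ is an isomorphism of quadratic Lie algebra bundles, and this $K$ is what gives $\mathcal G\cong\mathcal G_{\mathrm{can}}$.
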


\begin{proof}  i) Claim i) follows  from Theorem \ref{class_nondeg:thm}.\

ii)  From the Riemann-Hilbert correspondence, $\rho$ is trivial if and only if $\mathcal G_{\mathrm{can}}$ is trivial and $\nabla^{\mathrm{can}}$ is the trivial connection. As $\mathcal G$ and $\mathcal G_{\mathrm{can}}$ are isomorphic as vector bundles, $\mathcal G$ is  trivial.\

iii)  If $E_{\mathrm{can}}$ is heterotic then  the connection $\nabla^{\mathrm{can}}$ of $\mathcal G_{\mathrm{can}}= P\times_{\mathrm{Ad}} \mathfrak{g}$ is induced by a  principal connection $\theta\in \Omega^1(P,\mathfrak g)$ on a principal $G$-bundle $P$  (where
$G$ is a  connected Lie group with Lie algebra $\mathfrak{g}$) and the holonomy group of $\nabla^{\mathrm{can}}$ is included in the adjoint group $\mathrm{Ad}_{G}.$ 
One statement in iii)  follows (without any assumptions on $\mathfrak{g}$). 
For the converse, assume that $\mathfrak{g} =\mathfrak{h} \oplus \mathfrak{c}$, where  is  $\mathfrak{c}$ is the center of $\mathfrak g$, and that
$\mathrm{Im}\, (\rho ) \subset \mathrm{Int} (\mathfrak{g})$.
Then $\rho = \rho_{\mathfrak{h}} \oplus \rho_{\mathfrak{c}}$ decomposes into the direct sum of  two representations, where 
$\rho_{\mathfrak{h}} : \pi_{1}(M) \rightarrow \mathrm{Aut} (\mathfrak{h})$ satisfies 
$\mathrm{Im}(\rho_{\mathfrak{h}}) \subset \mathrm{Int}(\mathfrak{h})$ 
and $\rho_{\mathfrak{c}}:\pi_{1}(M) \rightarrow \mathrm{Aut} (\mathfrak{c})$ is trivial.  Therefore, it is sufficient to prove the statement when $\mathfrak{g}$ has no center 
and, respectively,  when it is abelian. Assume first  that $\mathfrak{g}$ has no center  and let $G$ be a connected Lie group, with trivial center, with Lie algebra $\mathfrak{g}.$ Since $\mathrm{Im}\, (\rho ) \subset \mathrm{Int} (\mathfrak{g})$, the connection $\nabla^{\mathrm{can}}$ 
induces a  principal connection  $\theta$ on a subbundle  $P$ of the principal frame bundle of $E_{\mathrm{can}}$,  with structure group $\mathrm{Ad}_{G}.$
Since $G$ is isomorphic to $\mathrm{Ad}_{G}$, we can (and will)  consider $P$ as a principal $G$-bundle. 
Then   $\mathcal G_{\mathrm{can}}$ is the adjoint bundle of  $P$ and $\nabla^{\mathrm{can}}$ is induced by $\theta.$ Hence $E_{\mathrm{can}}$ is heterotic.
Finally, assume  that $\mathfrak{g}$ is abelian. 
Then  $\rho$ is trivial
and $E_{\mathrm{can}}$ is heterotic, with $\mathcal G_{\mathrm{can}}= M\times \mathfrak{g}$  associated to the trivial principal bundle  $\pi : M\times G\rightarrow M$
and canonical flat principal  connection (where $\mathrm{Lie}\, (G) = \mathfrak{g}$).
\end{proof}

In the next   examples the Courant algebroid is of neutral signature precisely when
$k = \ell $ and $\langle \cdot , \cdot \rangle_{\mathfrak{g}}$ is of neutral signature,
 respectively.

\begin{exa}\label{abelian:ex}{\rm   i) Consider the abelian Lie algebra
$\mathfrak g= \mathbb{R}^{2m}$ endowed with the pseudo-hermitian structure $(\langle \cdot , \cdot \rangle_{\mathfrak g}, J_{\mathfrak{g}})$ obtained from the standard
identification $\mathbb{C}^{k,\ell}\cong \mathbb{R}^{2m}$, $k+\ell =m$.
Let $\Gamma$ be the fundamental group of a symplectic manifold $(M,\omega )$ and $\rho : \Gamma \to \mathrm{U}(k,\ell )$ any representation.
Then $\rho$ determines a non-degenerate generalised complex structure $\mathcal J$  on a transitive Courant algebroid over $M$ with underlying
symplectic structure $\omega$ and with a bundle of abelian quadratic Lie algebras of signature $(2k,2l)$. The set of  isomorphism types
of such structures $\mathcal J$ is the quotient
\[ \mathrm{Hom}(\Gamma , \mathrm{U}(k,\ell ))/\mathrm{U}(k,\ell ),\]
where the action is by conjugation in $\mathrm{U}(k,\ell )$.
Explicit examples with non-trivial finite or infinite holonomy group are easy to give. We can choose, for instance, a symplectic torus $(M,\omega )$ and any non-trivial
homomorphism  $\pi_1(M) \to \mathbb{Z}/2\mathbb{Z} \cong \{ \pm \mathrm{Id}\} \subset \mathrm{U}(k,\ell)$.    As soon as $\rho$ is non-trivial, $E_{\mathrm{can}}$ 
is not heterotic.\

ii) Note that, given any group $\Gamma$, for the existence of a non-trivial representation $\rho : \Gamma \to \mathrm{U}(m,m)$ for some $m$, the existence of
any non-trivial representation of real dimension $m$ is sufficient. This follows from
the inclusions
\[ \mathrm{GL}(m, \mathbb{R}) \subset \mathrm{Sp}(2m,\mathbb{R}) \subset \mathrm{SU}(m,m). \]
The first inclusion follows from the linear symplectic identification $(\mathbb{R}^{2m}, \omega_{\mathrm{can}})\cong T^*\mathbb{R}^m = \mathbb{R}^m \oplus (\mathbb{R}^m)^*$, which shows
that $\mathrm{GL}(m, \mathbb{R})$ acts naturally by symplectic transformations. The second inclusion follows by considering the complexification
$(\mathbb{C}^{2m},  \Omega = \omega_{\mathrm{can}}^{\mathbb{C}})$  of $(\mathbb{R}^{2m}, \omega_{\mathrm{can}})$ with the standard
complex conjugation denoted by $\tau$ and\linebreak[4]  $\gamma := \sqrt{-1}\Omega (\cdot , \tau \cdot )$ the sesquilinear pseudo-hermitian form (of neutral signature) induced by the  data $(\Omega , \tau)$ on the complex vector space $\mathbb{C}^{2m}$.
Then we see that
\[ \mathrm{Sp}(2m,\mathbb{R})\cong \mathrm{Aut}(\mathbb{C}^{2m}, \Omega , \tau) \subset \mathrm{Aut}(\mathbb{C}^{2m},  \gamma ) \cong \mathrm{SU}(m,m). \] }
\end{exa}

\begin{exa}{\rm  Let $(M, \omega )$ be a symplectic manifold such that $\pi_{1} (M) = \mathbb{Z}^\ell$, $\ell \in \mathbb{N}$ (e.g.\ a symplectic torus).
Let $G$ be a compact Lie group of even rank with Lie algebra $\mathfrak{g}$ and
 $T$  a maximal torus of $G$ with Lie algebra $\mathfrak{t}$.  Define
 $$
 \mathfrak{g}^{(1,0)} := \mathfrak{t}^{(1,0)} + \mathfrak{g} (R^{+})
 $$
where $\mathfrak{t}^{(1,0)}$  is the $(1,0)$-space of a complex structure
$J_{\mathfrak{t}}$
on $\mathfrak{t}$ and $R^{+}$ is a set of positive roots
of the semi-simple Lie algebra $[\mathfrak{g}, \mathfrak{g}]^{\mathbb{C}}$ relative to   the Cartan subalgebra $(\mathfrak{t}\cap [\mathfrak{g}, \mathfrak{g}])^{\mathbb{C}}$. Then $\mathfrak{g}^{(1,0)}$ is the $(1,0)$-space of an  (integrable) complex structure
on $\mathfrak{g}$ (see  \cite{wang,sam}). 
Let  $\langle \cdot , \cdot \rangle_{\mathfrak{g}}$ be
any bi-invariant scalar product, such that $J_{\mathfrak{t}}$ is skew-symmetric with respect to the restriction of
 $\langle \cdot , \cdot \rangle_{\mathfrak{g}}$ to $\mathfrak{t}.$ ($J_{\mathfrak{t}}$ can be always chosen such that such scalar product exists.)
 Then $J_{\mathfrak{g}}$ is skew-symmetric with respect to 
 $\langle \cdot , \cdot \rangle_{\mathfrak{g}}$.
 Choose  $g_{i} \in T$, for $i\in \{ 1, \cdots , \ell\}$ and define
$\rho : \pi_{1} (M)\rightarrow \mathrm{Aut} (\mathfrak{g})$ by
\begin{equation}\label{repr}
\rho (k_{1}, \cdots , k_{\ell}) := \Pi_{i}
(\mathrm{Ad}_{g_{i}})^{k_{i}}.
 \end{equation}
 Then  $\rho : \pi_{1} (M) \rightarrow \mathrm{Aut} ( \mathfrak{g}, \langle \cdot , \cdot \rangle_{\mathfrak{g}} ,
J_{\mathfrak{g}})$ is a representation by $J_{\mathfrak{g}}$-linear automorphisms. Moreover,  $\rho$ is trivial if and only if  $g_{i} \in Z(G)$ (the center of $G$) for all $i$.
 The  Courant algebroid $E_{\mathrm{can}}$ is heterotic.}
\end{exa}

\begin{exa}{\rm
Let $\mathfrak{g}$ be a Lie algebra. Consider the non-semisimple Lie algebra $\mathfrak{g}_1=\mathfrak{g}\oplus \mathfrak{g}^{*}$, with Lie bracket
\begin{equation}
 [X+\xi , Y+\eta ] = [X, Y] - \eta \circ \mathrm{ad}_{X} +\xi \circ \mathrm{ad}_{Y}
\end{equation}
and ad-invariant natural pairing
$\langle\cdot , \cdot \rangle_{\mathfrak{g}\oplus \mathfrak{g}^{*}}$ of neutral signature. Skew-symmetric complex structures on $\mathfrak{g}_1$
(or, equivalently, left-invariant generalized complex structures  on $G$, where $G$ is any Lie group with Lie algebra $\mathfrak{g}$) were described in 
\cite[Theorem 6]{dmitri} 
in terms of so called admissible pairs   
$(\mathfrak{k}, \omega )$. An admissible pair \cite[Definition 5]{dmitri} consists of a subalgebra   
$\mathfrak{k}\subset \mathfrak{g}^{\mathbb{C}}$ 
with $\mathfrak{k}+ \bar{\mathfrak{k}} = \mathfrak{g}^{\mathbb{C}}$ (where bar denotes the complex conjugation of $\mathfrak{g}^{\mathbb{C}}$ with respect to the real form $\mathfrak{g}$)
and a closed $2$-form $\omega \in {\bigwedge}^{2} (\mathfrak{k}^{*})$ the imaginary part of which is non-degenerate on $\mathfrak k \cap \mathfrak g$.  The $(1,0)$-space of the corresponding skew-symmetric complex structure  on $\mathfrak{g}_{1}$ is given by
$$
L =  \{ X+ \xi  \in \mathfrak{k}\oplus  (\mathfrak{g}^{\mathbb{C}} )^{*} \; : \;\xi\vert_{\mathfrak{k}} = i_{X}\omega \} .
$$ 
Assume that $\mathfrak{g}$ is semisimple of inner type  and let $T$ be a maximal torus of $G$.
Let $R^{+}$ be a positive root system
with respect to the complexification
$\mathfrak{h}$ of $\mathfrak{t} = \mathrm{Lie}(T)$ and let
$\mathfrak{h}_{0}\subset\mathfrak{h}$ such that $\mathfrak{h}_{0} + \bar{\mathfrak{h}}_{0} = \mathfrak{h}.$
Then
$$
\mathfrak{k} = \mathfrak{h}_{0} + \mathfrak{g} (R^{+})
$$
together with any $2$-form $\omega_{0}\in {\bigwedge}^{2}
(\mathfrak{h}_{0}^{*})$ (trivially extended to
$\mathfrak{k}$), such that
$\mathrm{Im} \left( {\omega}_{0}\vert_{\mathfrak{l} }\right)$ is non-degenerate
(where $\mathfrak{l} := \mathfrak{h}_{0}
\cap \mathfrak{g}$), define an admissible pair and hence a skew-symmetric complex structure $J$ of $\mathfrak{g}_{1}$, which is preserved by the adjoint action of $T$.\
Other classes of admissible pairs (on semisimple Lie algebras of outer type), which are preserved by the adjoint action of a maximal torus of $\mathfrak{g}$,
can be constructed using Section 5 of \cite{dmitri}. For example, when $\mathfrak{g} = \mathfrak{sl}_{n}(\mathbb{H})$, 
we may take 
$$
\mathfrak{k} = \mathfrak{h}_{0} + {\sum_{i\neq j}}  \mathfrak{g}_{\epsilon_{ij}} +\sum_{i, j^{\prime}}  \mathfrak{g}_{\epsilon_{ij^{\prime}}} 
$$
and
$$
\omega = \omega_{0} + \sum_{i\neq j} \eta_{(ij)} \omega_{\epsilon_{ij}} \wedge \omega_{\epsilon_{ji}}
$$
where 
$\eta_{(ij)}\in \mathbb{C}$ are arbitrary constants, 
$\omega_{0}\in {\bigwedge}^{2}
(\mathfrak{h}_{0}^{*})$ and   $\omega_{\epsilon_{ij}}\in ( \mathfrak{g}_{\epsilon_{ij}})^{*} $ are trivially extended to $\mathfrak{k}$,
and the roots  $\epsilon_{ij}$ and $\epsilon_{ij^{\prime}}$ are given by 
$\epsilon_{ij} = \epsilon_{i} -\epsilon_{j}$, $\epsilon_{ij^{\prime}} = \epsilon_{i} - \epsilon_{j^{\prime}}$ 
(for the precise statements and notation, see \cite[Proposition 22]{dmitri}
and \cite[Theorem 23]{dmitri}). 
 As opposed to $\omega_{0}$ above, the action of 
 $\omega$ on  the root part of $\mathfrak{k}$ is non-trivial (unless $\eta_{(ij)} =0$ for all $i, j$).\

If $(M, \omega )$ is a symplectic manifold
and $J$ is the skew-symmetric complex structure on $\mathfrak{g}_{1}$ defined by such an admissible pair, then any morphism
$$
\rho : \pi_{1}(M)
\rightarrow\mathrm{Ad}^{G_1}(T)= (\mathrm{Ad}^{G}\oplus (\mathrm{Ad}^{G})^*)(T)\subset
\mathrm{Aut} ( \mathfrak{g}\oplus \mathfrak{g}^{*})
$$
defines a representation by $J$-linear automorphisms of the quadratic Lie algebra
$\mathfrak{g}_{1}$. 
Here $(\mathrm{Ad}^{G})^*$ denotes the co-adjoint representation  $(\mathrm{Ad}^{G})^*(a) = (\mathrm{Ad}^{G}(a^{-1}))^*$ for all $a\in G$.}
\end{exa}

\subsection{Obstructions on the fiber type}
In this section we present two classes of examples  of quadratic Lie algebras  $(\mathfrak{g}, \langle \cdot , \cdot \rangle_{\mathfrak{g}})$ which 
are not the fiber type of   the quadratic Lie algebra bundle of a  transitive Courant algebroid admitting a non-degenerate  generalised complex structure.   
This is a consequence of Corollary \ref{consequences} i), since the first example admits no complex structure, while the second example admits complex structures, but none of them is  skew-symmetric with respect to $\langle\cdot , \cdot \rangle_{\mathfrak{g}}$.

\begin{exa}\label{no_cx:ex}{\rm 
Consider the $6$-dimensional Lie algebra $\mathfrak{g}$ which has the following 
non-zero differentials in terms of a basis $\{ e^1,\ldots ,e^6\}$ of $\mathfrak{g}^*$:
\[ de^4=e^1 \wedge e^2,\quad de^5=e^1\wedge e^4,\quad de^6=e^2\wedge e^4.\]
According to Lemma \ref{7param:lem} below it has a $7$-dimensional space of 
$\mathrm{ad}$-invariant scalar products. Let $\langle \cdot , \cdot \rangle$ be any such scalar product. 
According to \cite{salamon}, $\mathfrak g$ does not admit any complex structure. (Note that $\mathfrak g$ is isomorphic to the Lie algebra 
$(0,0,0,12,14,24)$ in \cite{salamon}.) 
As a consequence, none of the quadratic Lie algebras $(\mathfrak{g}, \langle \cdot, \cdot \rangle)$ can occur as fiber type  
of the quadratic Lie algebra bundle $\mathcal G$ of a transitive Courant algebroid admitting a non-degenerate generalised complex structure.}
\end{exa}

\begin{lem}\label{7param:lem} Consider the Lie algebra of Example \ref{no_cx:ex}. The space $(\mathrm{Sym}^2\mathfrak g^*)^{\mathfrak g}$ of 
$ad$-invariant symmetric bilinear forms on $\mathfrak{g}$ is of dimension $7$ and its open subset 
consisting of scalar products is dense. An example 
of a scalar product of neutral signature is 
\[ \beta_0=2 e^1e^6-2e^2e^5 -(e^3)^2 +(e^4)^2. \] 
\end{lem}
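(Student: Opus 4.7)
The plan is to parameterise $(\mathrm{Sym}^2 \mathfrak{g}^*)^{\mathfrak{g}}$ by writing out the $\mathrm{ad}$-invariance condition $\beta([e_i,e_j],e_k) + \beta(e_j,[e_i,e_k]) = 0$ on a basis $e_1,\ldots,e_6$ dual to $e^1,\ldots,e^6$. First, I would read off the only non-trivial basis brackets from the given differentials via $d\alpha(X,Y) = -\alpha([X,Y])$: up to sign, $[e_1,e_2] = e_4$, $[e_1,e_4] = e_5$, $[e_2,e_4] = e_6$, with all other brackets on basis elements vanishing, so that $e_3, e_5, e_6$ span the centre.

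Next, I would enumerate the resulting linear constraints on $\beta_{ij} := \beta(e_i,e_j)$. Only index pairs $(i,j)$ with $i \in \{1,2,4\}$ contribute, and among these $(4,1)$ and $(4,2)$ merely reproduce relations already coming from $(1,2)$, $(1,4)$ and $(2,4)$. Running these three pairs against $k \in \{1,\ldots,6\}$ yields the vanishing of $\beta_{14}, \beta_{15}, \beta_{24}, \beta_{26}, \beta_{34}, \beta_{35}, \beta_{36}, \beta_{45}, \beta_{46}, \beta_{55}, \beta_{56}, \beta_{66}$ together with the two genuine linear relations $\beta_{25} = -\beta_{44}$ and $\beta_{16} = \beta_{44}$. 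The remaining entries $\beta_{11}, \beta_{12}, \beta_{13}, \beta_{22}, \beta_{23}, \beta_{33}, \beta_{44}$ are free, giving the claimed dimension $7$.

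For the density statement, the scalar-product locus is cut out inside this $7$-dimensional subspace by the polynomial inequality $\det \beta \neq 0$, hence is either empty or Zariski-open and therefore dense in the Euclidean topology. To rule out emptiness, and simultaneously to verify the last assertion, I would check directly that $\beta_0$ satisfies every constraint derived above: $\beta_{16}=\beta_{44}=1$, $\beta_{25}=-1$, and every entry listed as vanishing does vanish. Its signature then follows from the $\beta_0$-orthogonal decomposition $\mathfrak{g} = \mathrm{span}(e_1,e_6)\oplus \mathrm{span}(e_2,e_5)\oplus \mathbb{R} e_3 \oplus \mathbb{R} e_4$: the first two summands carry the hyperbolic forms $2e^1 e^6$ and $-2 e^2 e^5$, each of signature $(1,1)$, while $e_3$ and $e_4$ contribute signatures $(0,1)$ and $(1,0)$. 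The total is $(3,3)$, neutral, and in particular $\det \beta_0 \neq 0$.

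The argument is elementary linear algebra; the only point requiring care is the bookkeeping of the invariance equations, where one must avoid overlooking the two linear relations $\beta_{16} = \beta_{44}$ and $\beta_{25} = -\beta_{44}$ that couple entries which would otherwise appear unrelated.
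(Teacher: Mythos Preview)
Your proof is correct and follows essentially the same approach as the paper: both solve the linear system $\mathrm{ad}_{e_1}\cdot\beta=\mathrm{ad}_{e_2}\cdot\beta=\mathrm{ad}_{e_4}\cdot\beta=0$ directly, arriving at the same seven free parameters $\beta_{11},\beta_{12},\beta_{13},\beta_{22},\beta_{23},\beta_{33},\beta_{44}$ with the relations $\beta_{16}=-\beta_{25}=\beta_{44}$, and both deduce density from the non-emptiness of the Zariski-open non-degeneracy locus witnessed by $\beta_0$. The only cosmetic difference is that the paper shortcuts six of the parameters by observing that the coadjoint action is trivial on $\mathrm{span}\{e^1,e^2,e^3\}$, whereas you enumerate the constraints directly; you also spell out the signature computation for $\beta_0$, which the paper leaves implicit.
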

\begin{proof}
We need to solve the linear system 
\[ \mathrm{ad}_{e_1}\cdot \beta = \mathrm{ad}_{e_2}\cdot \beta = \mathrm{ad}_{e_4}\cdot \beta =0,\]
where $(e_i)$ is the basis of $\mathfrak{g}$ dual to $(e^i)$, the unknown $\beta$ is a
symmetric bilinear form on $\mathfrak g$ and the dot denotes the natural action on the tensor algebra. 
Since the co-adjoint action is trivial on the subspace $\mathrm{span}\{ e^1,e^2, e^3\}$,   
the components $\beta_{ij}=\beta (e_i,e_j)=\beta_{ji}$ for $i,j \in \{1,2,3\}$ are $6$ free parameters. 
By analysing the linear system, we see that the remaining components are all equal to zero with exception of 
\[ \beta_{16}=-\beta_{25}=\beta_{44},\]
which is the seventh free parameter.  
The solution $\beta_0$ corresponds to $\beta_{16}=-\beta_{25}=\beta_{44}=-\beta_{33}=1$. 
Its non-degeneracy shows that the Zariski open subset of $(\mathrm{Sym}^2\mathfrak g^*)^{\mathfrak g}$ consisting 
of scalar products  is non-empty and therefore dense. 
\end{proof}

\begin{exa}\label{ex-2}{\rm
Consider the $6$-dimensional Lie algebra $\mathfrak{g}$ which has the following 
non-zero differentials in terms of a basis $\{ e^1,\ldots ,e^6\}$ of $\mathfrak{g}^*$:
\begin{equation} \label{000}
de^4=e^1 \wedge e^2,\quad de^5=e^1\wedge e^3,\quad de^6=e^2\wedge e^3.
\end{equation}
In the notation of \cite{salamon}, $\mathfrak{g}$ is the Lie algebra $(0,0,0,12,13,23)$.
Note that $\mathfrak{g}$ has an $\mathrm{ad}$-invariant  scalar product of neutral signature, given by 
$$
\rho (e_{i}, e_{j})= \rho(e_j,e_i) =( -1)^{i-1},\ \forall i\leq j,\ i+ j =7,
$$
and all other components equal to zero.
Also, $\mathfrak{g}$ admits a complex structure $J$  with the space of $(1,0)$-covectors given by 
$$
{\bigwedge}^{1,0} = \mathrm{span}_{\mathbb{C}}\{ e^{1} + i e^{2}, e^{3} + i e^{4}, e^{5} + i e^{6} \}  
$$
Note however that $J$ is not skew-symmetric with respect to $\rho .$
The following stronger statement  implies 
that $\mathfrak{g}$   (with any $\mathrm{ad}$-invariant scalar product) cannot be 
the fiber type of a transitive Courant algebroid admitting a non-degenerate
generalised complex structure.}
\end{exa}

\begin{lem}\label{lem-ex}
i) Consider the Lie algebra of Example \ref{ex-2}. The space $(\mathrm{Sym}^2\mathfrak g^*)^{\mathfrak g}$ of 
$ad$-invariant symmetric bilinear forms on $\mathfrak{g}$ is of dimension $7$ and its open subset 
consisting of scalar products is dense.\

ii) There is no complex structure  on $\mathfrak{g}$ which is skew-symmetric with respect to an $\mathrm{ad}$-invariant scalar product. 
\end{lem}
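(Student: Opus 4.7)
For part (i), the plan is to solve the ad-invariance linear system $\beta([e_k, e_i], e_j) + \beta(e_i, [e_k, e_j]) = 0$ explicitly. Since $\mathfrak{z} := \mathrm{span}\{e_4, e_5, e_6\}$ coincides with both $[\mathfrak g, \mathfrak g]$ and the center of $\mathfrak g$, only $\mathrm{ad}_{e_k}$ for $k \in \{1, 2, 3\}$ yields non-trivial equations. Substituting the brackets \eqref{000} and exhausting all triples $(k, i, j)$ reduces the $21$ components $\beta_{ij} = \beta(e_i, e_j)$ of a generic symmetric form to seven free parameters: the six entries of the symmetric $V \times V$ block $G$, where $V := \mathrm{span}\{e_1, e_2, e_3\}$, together with the single entry $c := \beta_{16}$, which also forces $\beta_{34} = c$ and $\beta_{25} = -c$. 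All other components vanish, so the Gram matrix takes the block form
\[
\beta = \begin{pmatrix} G & H \\ H^T & 0 \end{pmatrix},
\]
where $H$ is the off-diagonal $V \times \mathfrak z$ pairing, a nonzero scalar multiple (namely $c$) of the natural pairing $V \otimes \Lambda^2 V \to \mathbb R$ induced by the identification $\mathfrak z \cong \Lambda^2 V$. A direct block-determinant computation gives $\det \beta = -(\det H)^2$, so non-degeneracy of $\beta$ is equivalent to $c \neq 0$, a dense Zariski-open condition on $(\mathrm{Sym}^2 \mathfrak g^*)^{\mathfrak g}$, producing the required scalar products.

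For part (ii), I will argue by contradiction: suppose $J$ is an integrable complex structure on $\mathfrak g$ skew-symmetric with respect to some ad-invariant scalar product $\beta$. The first observation, from part (i), is that $\mathfrak z$ is $\beta$-Lagrangian, since it is $3$-dimensional, totally $\beta$-isotropic, and $\dim \mathfrak g = 6$. In particular, the off-diagonal block $H : V \to \mathfrak z^*$, $v \mapsto \beta(v, \cdot)|_{\mathfrak z}$, is an isomorphism. Writing $J$ in block form with respect to $\mathfrak g = V \oplus \mathfrak z$,
\[
J = \begin{pmatrix} A & B \\ C & D \end{pmatrix}, \qquad B : \mathfrak z \to V,
\]
the plan is to extract two incompatible rank constraints on $B$ and so force $B = 0$.

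The first constraint comes from the skew-symmetry of $J$: the associated symplectic form $\omega(X, Y) := \beta(JX, Y)$ satisfies $\omega|_{\mathfrak z \times \mathfrak z}(X, Y) = H(BX)(Y)$, as follows at once from $\beta|_{\mathfrak z \times \mathfrak z} = 0$. Since $H$ is injective, $\mathrm{rank}(\omega|_{\mathfrak z}) = \mathrm{rank}(B)$; but $\omega|_{\mathfrak z}$ is a skew form on the $3$-dimensional space $\mathfrak z$, so its rank is even, forcing $\mathrm{rank}(B) \in \{0, 2\}$. The second constraint comes from integrability: computing the Nijenhuis tensor $N_J(X, Y)$ for $X, Y \in \mathfrak z$ and using the centrality of $\mathfrak z$, every bracket involving an element of $\mathfrak z$ vanishes, and all terms collapse to $N_J(X, Y) = [BX, BY]$. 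Hence $B(\mathfrak z) \subset V$ must be an abelian subspace, but the bracket on $V$ is, up to sign, the wedge product with values in $\Lambda^2 V \cong \mathfrak z$, which is nonzero on any linearly independent pair. Consequently abelian subspaces of $V$ have dimension at most $1$, so $\mathrm{rank}(B) \leq 1$.

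Combining the two constraints gives $\mathrm{rank}(B) = 0$, so $B = 0$ and $J$ preserves the $3$-dimensional real subspace $\mathfrak z$; then $J|_{\mathfrak z}$ would be a complex structure on an odd-dimensional real vector space, a contradiction. The main technical point is the collapse of the Nijenhuis tensor on $\mathfrak z \times \mathfrak z$ to the single term $[BX, BY]$, but the $2$-step nilpotency of $\mathfrak g$ makes this reduction essentially automatic (every cross-term contains a bracket with a central element, hence vanishes); once that reduction is in place, the rank juggling yields the contradiction immediately.
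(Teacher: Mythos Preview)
Your proof of part (i) is correct and essentially parallel to the paper's: both solve the ad-invariance system directly and identify the seven parameters together with the non-degeneracy condition. The paper phrases the computation through an endomorphism $A$ with $\tilde\rho = \rho(A\cdot,\cdot)$ relative to the fixed scalar product $\rho$, whereas you work directly with the Gram matrix; the outcome and the level of argument are the same.

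For part (ii) your argument is correct and genuinely different from the paper's. The paper invokes Salamon's classification of complex structures on six-dimensional nilpotent Lie algebras: it recalls how the algebra $(0,0,0,12,13,23)$ arises in that classification and extracts from the normal form of a generic complex structure the existence of a $(1,0)$-vector of the shape $e_3-\mu e_4$ (with $\mu\notin\mathbb{R}$), which is then seen to be non-isotropic for every ad-invariant scalar product. Your route is self-contained: using that the center $\mathfrak z$ is Lagrangian (from part (i)) you obtain the parity constraint $\mathrm{rank}\,B\in\{0,2\}$ from the skew form $\omega|_{\mathfrak z}$, and from the Nijenhuis collapse $N_J|_{\mathfrak z\times\mathfrak z}(X,Y)=[BX,BY]$ the abelian-subspace constraint $\mathrm{rank}\,B\le 1$; together these force $B=0$ and hence a $J$-invariant odd-dimensional subspace. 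Each step checks: centrality of $\mathfrak z$ kills all cross-brackets in $N_J$, and the bracket $V\times V\to\mathfrak z$ is (up to sign) the wedge map, so linearly independent pairs in $V$ never commute. Your argument avoids any external classification and would adapt to other $2$-step nilpotent algebras whose center coincides with the commutator, is odd-dimensional, and whose bracket on a complement is ``generic'' in this sense. The paper's approach, by contrast, situates the example within Salamon's list and gives more explicit control over the possible complex structures, at the cost of recalling a fair amount of that classification.
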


\begin{proof} i) If $\tilde{\rho}$ is an $\mathrm{ad}$-invariant symmetric bilinear form, then we can write it as $\tilde{\rho }(u, v) = \rho (Au, v)$ where $A\in \mathrm{End} (\mathfrak{g})$ satisfies
\begin{equation}\label{ad-sim}
\mathrm{ad}_{u} \cdot A =0,\ \rho (Au, v) = \rho (u, Av),
\end{equation}
for any $u, v\in \mathfrak{g}.$  
Let $(e_{i})$ be the basis of $\mathfrak{g}$ dual to the basis $(e^{i}).$  
From the  first condition in (\ref{ad-sim})  together with (\ref{000}), we deduce that 
$A(e_{i}) = \lambda e_{i}$ for any $4\leq i\leq 6$ and 
\begin{equation}
A (e_{i}) = \lambda e_{i} +\sum_{j=4}^{6} \lambda_{ij} e_{j}
\end{equation}
for any $1\leq i\leq 3$, where $\lambda , \lambda_{ij}\in \mathbb{R}$ are arbitrary. From the  second condition in (\ref{ad-sim}) we deduce that 
$\lambda_{26} = -\lambda_{15}$, $\lambda_{36} = \lambda_{14}$ and $\lambda_{35} = - \lambda_{24}.$  Letting $\alpha := \lambda_{14}$,
$\beta := \lambda_{15}$ and $\gamma := \lambda_{24}$,  we obtain that 
$\tilde{\rho} (e_{i}, e_{j}) = 0$ for any  $i\le j$, except 
\begin{align}
\nonumber& \tilde{\rho} (e_{1}, e_{1}) =\lambda_{16},\  \tilde{\rho}( e_{2}, e_{2})  = -\lambda_{25},\ \tilde{\rho}( e_{3}, e_{3}) = \lambda_{34}\\
\nonumber& \tilde{\rho} (e_{1}, e_{2}) = -\beta ,\ \tilde{\rho} (e_{1}, e_{3}) = \alpha ,\ \tilde{\rho}( e_{1}, e_{6}) = \lambda \\
\label{tilde-rho}& \tilde{\rho} ( e_{2}, e_{3}) =\gamma ,\   \tilde{\rho} (e_{3}, e_{4}) =  - \tilde{\rho}( e_{2}, e_{5}) =\lambda .
\end{align}
Thus  $\tilde{\rho}$
depends on  7  independent parameters 
and it is  non-degenerate if and only if $\lambda \neq 0.$\

ii)  It is sufficient to prove  that for any complex  structure  $J$ on $\mathfrak{g}$, there is a basis $(e^{i})$ of $\mathfrak{g}^{*}$ which satisfies 
(\ref{000})
(together with $de^{i} =0$ for any $i\in \{1,2,3\}$) 
 such  that ${\bigwedge}^{1,0}_{J}$ is generated by  forms $\omega^{1}$, $\omega^{2}$ and $\omega^{3}$, where 
\begin{equation}\label{omega-13}
\omega^{1} = e^{1} + a e^{2},\  \omega^{3} = e^{5} + b e^{6}
\end{equation}
($a, b\in \mathbb{C}\setminus \{ 0\}$)  and   
\begin{equation}\label{omega-2}
\omega^{2} \in \mathrm{span}_{\mathbb{C}} \{ e^{i},\ 1\leq i\leq 4\}
\end{equation}
satisfies  
$d\omega^{2} \in {\bigwedge}^{2,0}_{J} \oplus {\bigwedge}_{J}^{1,1}$.   
Once we know this,  we deduce that   there is a $(1,0)$-vector of the form 
$v:= e_{3} -\mu e_{4}$ (with $\mu \in \mathbb{C}\setminus \mathbb{R}$). But the latter  is not isotropic with respect to any 
$\mathrm{ad}$-invariant scalar product   (from  (\ref{tilde-rho}) with $\lambda \neq 0$).

The  above statement is  implicitly  contained in  \cite[Theorem 3.2]{salamon}. 
For completeness of our exposition we briefly explain  how it follows from  the arguments in  \cite{salamon}. 
In a first stage, one shows that for any
$6$-dimensional nilpotent Lie algebra $\mathfrak{h}$ with a complex structure $J$, there is a basis $(f^{i})$ of $\mathfrak{h}^{*}$
which satisfies 
\begin{equation}\label{prop-d}
d f^{i} \in {\bigwedge}^{2} \langle f^{1}, \cdots , f^{i-1} \rangle ,\ 1\leq i\leq 6
\end{equation}
(where  $f^{0} := 0$
and $\langle f^{1}, \cdots , f^{i-1} \rangle$ denotes the vector space spanned by $f^{i}, \cdots , f^{i-1}$) 
such that 
$df^{6}$ does neither involve $f^{3}\wedge f^{5}$ nor $f^{4}\wedge f^{5}$, 
${\bigwedge}^{1,0}_{J}$ is generated either by
\begin{equation}\label{sit1}
\omega^{1} = f^{1} + i f^{2},\ \omega^{2} = f^{3} + i f^{4},\ \omega^{3} = f^{5} + i f^{6}
\end{equation}
or by 
 \begin{equation}\label{sit2}
\omega^{1} = f^{1} + i f^{2},\ \omega^{2} = f^{4} + i f^{5},\ \omega^{3} = f^{3} + i f^{6}
\end{equation}
and   the following equations are satisfied:
\begin{equation}\label{conditions-s}
d\omega^{1} = 0,\  \omega^{1}\wedge  d\omega^{2} = 0,\ \omega^{1}\wedge \omega^{2} \wedge d\omega^{3} =0
\end{equation}
(see \cite[Theorem 2.5]{salamon}). 
By  suitable base changes,  one obtains 
the classification of $6$-dimensional  nilpotent Lie algebras admitting a complex structure, stated in Theorems 3.1, 3.2 and 3.3 of \cite{salamon}.   Our Lie algebra $\mathfrak{g} = (0,0,0,12, 13,23)$ is obtained by such a procedure starting from
(\ref{sit1}).  (Starting from \eqref{sit2} leads to Lie algebras which are not isomorphic to $\mathfrak{g}$ and thus  \eqref{sit2} will not be considered here.)  More precisely, the  second relation in (\ref{conditions-s}) 
implies  that $df^{4}$ is a multiple of $f^{1}\wedge f^{2}$. The same holds for $df^{3}.$   Thus, there is a  new basis $(\tilde{f}^{i})$,
still satisfying (\ref{prop-d}) and, moreover,
$$
d\tilde{f}^{3} = 0,\ d\tilde{f}^{4} = \tilde{f}^{1}\wedge \tilde{f}^{2}.
$$
The covectors 
$\tilde{f}^{3}, \tilde{f}^{4} $ are linear combinations of  $f^{3}, f^{4}$ and $\tilde{f}^{i} = f^{i}$ for any $i\in \{ 1,2,5, 6\}$. 
 After rescaling $\omega^{2}$ if necessary we obtain in the new basis 
\begin{equation}\label{sit-tilde}
\omega^{1} = \tilde{f}^{1} + i \tilde{f}^{2},\ \omega^{2} = \tilde{f}^{3} + a \tilde{f}^{4},\ \omega^{3} = \tilde{f}^{5} + i \tilde{f}^{6}
\end{equation}
where $a\in \mathbb{C}\setminus \mathbb{R}$.  The third relation in (\ref{conditions-s}) implies  that
$d\tilde{f}^{6} \in {\bigwedge}^{2} \langle \tilde{f}^{1}, \tilde{f}^{2}, \tilde{f}^{3}, \tilde{f}^{4} \rangle$. From 
(\ref{prop-d}),  the same holds for
$d\tilde{f}^{5}.$    
Next, one considers various cases according to the vanishing or nonvanishing of 
the $4$-forms $d\tilde{f}^{5}\wedge d\tilde{f}^{5}$ and $d\tilde{f}^{6}\wedge d\tilde{f}^{6}$. 
The case 
$$
d\tilde{f}^{5}\wedge d\tilde{f}^{5} = d\tilde{f}^{6}\wedge d\tilde{f}^{6} = d\tilde{f}^{5}\wedge d\tilde{f}^{6} =0
$$
leads to  the  Lie algebra 
$\mathfrak{g}$  (and   to two other  Lie algebras with $b_{1} =3$, 
see  the proof of \cite[Theorem 3.2]{salamon}).  More precisely, one considers a suitably chosen 
linear transformation $A$ which preserves the subspaces   generated by $\tilde{f}^{1}$ , $\tilde{f}^{2}$, 
by $\tilde{f}^{5}$,   $\tilde{f}^{6} $, and  by  $\tilde{f}^{1}, \cdots  , \tilde{f}^{4}$ respectively, 
such that in the new basis  $e^{i} := A(\tilde{f}^{i})$ relations 
(\ref{000})  are satisfied and  $d e^{i} =0$ for any $1\leq i\leq 3$.  (For the two other isomorphism types of Lie algebras mentioned above, 
a similar transformation exists which brings the differentials to a different form. This is excluded in our setting because the Lie algebra was fixed from the beginning).

From the listed properties of $A$, we see that in the basis $(e^{i})$,  
$\omega^{1}$, $\omega^{2}$ and $\omega^{3}$ are 
of the form (\ref{omega-13}) and (\ref{omega-2}).  
Moreover all complex structures on $\mathfrak{g}$ enjoy this property 
(according to \cite[Apendix]{salamon}, 
$\mathfrak{g}$ is the unique  $6$-dimensional nilpotent Lie algebra with $b_{1} =3$ and $b_{2} =8$ which admits a complex structure.
In particular,  all other Lie algebras arising from the above argument are non-isomorphic to $
\mathfrak{g}$).  
\end{proof}

\section{Examples degenerating along a complex hypersurface}
\label{deg:Sec}
In this section we construct examples of generalised complex structures on transitive 
Courant algebroids which are non-degenerate precisely on the complement of a (possibly singular) complex  
analytic hypersurface. 
\begin{prop} \label{poisson_ex:prop}Let $(M,J, \beta )$ be a complex manifold endowed with a holomorphic Poisson structure $\beta$, 
$(\mathfrak{g}, \langle \cdot , \cdot \rangle_{\mathfrak{g}})$ a quadratic Lie algebra endowed with a skew-symmetric complex structure $J_\mathfrak{g}$ 
and $\rho : \pi_1(M) \to \mathrm{Aut}(\mathfrak g, \langle \cdot , \cdot \rangle_{\mathfrak{g}},  J_{\mathfrak{g}}  )$ a representation. 
Then these data define a generalised complex structure $\mathcal J$ on the untwisted Courant algebroid $E= \mathbb{T}M \oplus 
\mathcal G$  with the flat bundle of quadratic Lie algebras $\mathcal G = \widetilde{M} \times_{\rho} \mathfrak g$.
The generalised complex structure $\mathcal J$ has 
components $J$, $B=\beta + \bar\beta$ and $A$ and  all other components equal to zero. Here 
$A$ is the parallel field of skew-symmetric complex structures on the fibers of $\mathcal G$ associated with $J_\mathfrak{g}$. 
Moreover,  $\mathcal J$ is non-degenerate  precisely on the complement of the 
hypersurface $\{ \det \beta=0\}$, where $\beta$ is considered here as a map ${\bigwedge}^{1,0}T^*M \to T^{1,0}M$ (such that 
$\det \beta$ is a holomorphic section of $\det (T^{1,0}M)^{\otimes 2}$, the square of the anti-canonical line bundle). 
\end{prop}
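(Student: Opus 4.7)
The plan is to verify the claim in three steps: an algebraic check that the stated data define a generalised almost complex structure via Lemma~\ref{comp:lem}, verification of integrability using the form of the Dorfman bracket in the untwisted case, and analysis of the locus of non-degeneracy.

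First, I apply Lemma~\ref{comp:lem} with components $J$, $A$, $B = \beta + \bar\beta$ and $C = \mu = \nu = 0$. The skew-symmetries $B^* = -B$ and $A^* = -A$ hold because $B$ is a bivector and $J_{\mathfrak g}$ is skew-symmetric (with parallel transport preserving $\langle \cdot ,\cdot \rangle_{\mathcal G}$). With the vanishing cross-components, the quadratic relations (\ref{omegaB:eq}), (\ref{five:eq}) and (\ref{six:eq}) are trivial, (\ref{J^2:eq}) reduces to $J^2 = -\mathrm{Id}$ and (\ref{A2:eq}) to $A^2 = -\mathrm{Id}$. The only substantive identity is (\ref{JB:eq}), which becomes $JB = BJ^*$; this follows from $\beta$ being of pure type $(2,0)$ and $\bar\beta$ of type $(0,2)$, so that $B$ respects the decompositions $(T^*M)^{\mathbb C} = \bigwedge^{1,0} \oplus \bigwedge^{0,1}$ and $(TM)^{\mathbb C} = T^{1,0} \oplus T^{0,1}$.

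For integrability I identify the $(1,0)$-bundle $L \subset E^{\mathbb C}$ using (\ref{L_in_comp:eq}). Since $\mu = \nu = 0$ it splits as $L = L_M \oplus \mathcal G^{(1,0)}$, where $L_M \subset (\mathbb T M)^{\mathbb C}$ is the $(1,0)$-bundle of the generalised almost complex structure on $\mathbb T M$ determined by $(J, B)$ and $\mathcal G^{(1,0)} \subset \mathcal G^{\mathbb C}$ is the $+i$-eigenbundle of $A$. Because $E$ is untwisted ($R = 0$, $H = 0$), the Dorfman bracket decouples: on $\mathbb T M$ it is the classical Courant bracket, while for $\mathcal G$-sections $[X, r] = \nabla_X r$ and $[r_1, r_2] = 2\langle \nabla r_1, r_2\rangle_{\mathcal G} + [r_1, r_2]_{\mathcal G}$. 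Closure of $L_M$ under the Courant bracket is the classical assertion that $(J, \beta)$ with $\beta$ holomorphic Poisson gives a generalised complex structure on the exact Courant algebroid $\mathbb T M$; if one prefers, this may be obtained by specialising the integrability system of Theorem~\ref{integr-thm} to the present data, with the Poisson condition reappearing via Proposition~\ref{poisson:prop}. Closure of $\mathcal G^{(1,0)}$ rests on three ingredients: $A$ is $\nabla$-parallel (since $\rho$ commutes with $J_{\mathfrak g}$), so $\nabla r$ remains of type $(1,0)$; $\mathcal G^{(1,0)}$ is isotropic because $A$ is skew with $A^2 = -\mathrm{Id}$, which kills the term $\langle \nabla r_1, r_2\rangle_{\mathcal G}$; and integrability of $J_{\mathfrak g}$ as a Lie algebra complex structure ensures $[r_1, r_2]_{\mathcal G} \in \mathcal G^{(1,0)}$. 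The mixed brackets $[X + \xi, r] = \nabla_X r$ stay in $\mathcal G^{(1,0)}$ by parallelism, and their reverses follow from (\ref{dorfman2}) together with $\langle \mathbb T M, \mathcal G\rangle = 0$.

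Finally, since $B$ preserves the type decomposition, $B_p$ is bijective if and only if $\beta_p : \bigwedge^{1,0}T^*M \to T^{1,0}M$ is bijective, equivalently $\det \beta (p) \neq 0$; holomorphicity of $\beta$ identifies this locus as the complement of a (possibly singular) complex analytic hypersurface in $M$, and $\det \beta$ is naturally a holomorphic section of $\det(T^{1,0}M)^{\otimes 2}$. The main obstacle in the argument is the closure of $L_M$, which amounts to the classical equivalence between holomorphic Poisson structures and generalised complex structures of the indicated form on the exact Courant algebroid $\mathbb T M$; once this is invoked, the extension to the transitive algebroid is essentially automatic because untwistedness and parallelism decouple $\mathcal G$ from $\mathbb T M$ in the Dorfman bracket.
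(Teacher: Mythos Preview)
Your proposal is correct; the algebraic check and the non-degeneracy argument match the paper's proof. For integrability, however, you take a genuinely different route. The paper verifies the component equations of Theorem~\ref{integr-thm} one by one: with $C=\mu=\nu=H=R=0$ the relations 2, 3, 6, 12 of Lemma~\ref{integr-comp} are trivial, relation~1 is the integrability of $J$, relation~9 that of $A$, relation~7 is $\nabla A=0$ (which then implies~8), relation~4 is the Poisson condition on $B=\beta+\bar\beta$ (checked via the Schouten decomposition $[[B,B]]=[[\beta,\beta]]+[[\bar\beta,\bar\beta]]+2[[\beta,\bar\beta]]$), and relation~5 is verified by evaluating on holomorphic and anti-holomorphic covectors. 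You instead split $L=L_M\oplus\mathcal G^{(1,0)}$ and check closure directly under the Dorfman bracket, importing the classical fact that $(J,\beta)$ gives a generalised complex structure on $\mathbb TM$ and handling the $\mathcal G$-part through parallelism of $A$, isotropy of $\mathcal G^{(1,0)}$, and Lie-algebra integrability of $J_{\mathfrak g}$. Your argument is more self-contained and makes the decoupling of $\mathbb TM$ from $\mathcal G$ in the untwisted setting fully transparent; the paper's approach has the virtue of exercising the component formalism developed in the appendix and avoids appealing to the exact-case result as a black box.
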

\begin{proof}For the data $(J, B=\beta + \bar\beta, A)$ with all other components set to zero, the 
equations in Lemma \ref{comp:lem} reduce to $B^*=-B$, $A^*=-A$, $J^2=-\mathrm{Id}_{TM}$, 
$A^2=-\mathrm{Id}_{\mathcal G}$ and $JB = BJ ^*$, which are all satisfied. Note that the 
last equation is equivalent to $B$ being of type $(2,0)+(0,2)$ with respect to $J$. This proves 
that the above data define a generalised almost complex structure $\mathcal J$. 

Next we check 
its integrability using Theorem \ref{integr-thm}. Due to $C=0$, $\mu=0$, $\nu=0$, $H=0$ and $R=0$, the 
relations enumerated in Lemma~\ref{integr-comp} 
as 2, 3, 6 and 12 are all trivially satisfied. 
In the following we discuss the remaining relations, always with the vanishing components 
$C, \mu , \nu $, as well as $H, R$,  set to zero.
The relation listed as 1 is equivalent (with the vanishing components set to zero)
to the integrability of $J$ while 9 corresponds to the integrability of $A$. The relation 7 is equivalent 
to $\nabla A=0$ and implies 8, using that the complex structure $A$ is skew-symmetric and thus isometric.
This leaves us with the relations 4 and 5. The  relation 4 is satisfied if and only if $B$ is a Poisson tensor, as discussed
in the proof of Proposition \ref{poisson:prop}. The first two terms in the Schouten bracket $[[B,B]]=[[\beta , \beta ]] + [[\bar\beta , \bar\beta]] + 2[[ \beta , \bar \beta]]$ vanish
because $\beta$ is a (holomorphic) Poisson tensor and the third term vanishes because it is the bracket of a holomorphic with an anti-holomorphic bivector. 
This proves that $B$ is a Poisson tensor. The (tensorial) relation 5 can be easily checked by taking $\xi$ and $\eta$ holomorphic or anti-holomorphic 
(all possible combinations) and 
observing that $B$ maps holomorphic (respectively anti-holomorphic) covector fields to holomorphic (respectively anti-holomorphic)
vector fields. 
This proves the integrability of $\mathcal J$. The claim about the 
non-degeneracy set of  $\mathcal J$ follows from the fact that  the rank of $B_p : T_p^*M \to T_pM$ coincides with twice the complex rank of $\beta : {\bigwedge}^{1,0}T_p^*M \to T_p^{1,0}M$, $p\in M$.
\end{proof}
\begin{exa}{\rm Consider the Hopf surface $M= (\mathbb{C}^2\setminus \{ 0\})/\Gamma\cong S^3\times S^1$ with its canonical complex structure $J$, 
where $\Gamma\cong \mathbb{Z}$ denotes the cyclic group generated by the transformation $\mathbb{C}^2\setminus \{ 0\} \to \mathbb{C}^2\setminus \{ 0\}, z\mapsto \lambda z$, 
($\lambda \in \mathbb{R}\setminus \{ -1, 0, 1\}$). The scale invariant holomorphic Poisson structure $z^1z^2\partial_1\wedge \partial_2$ on $\mathbb{C}^2$ induces
a holomorphic Poisson structure $\beta$ on $(M,J)$ with the degeneracy locus $\{ z^1z^2=0\}\subset M$. Let $\rho : \pi_1(M)\cong \mathbb{Z} \to \mathrm{U}(k,\ell )$ be any non-trivial representation. 
Then the data $(M,J,\beta , \rho )$ define a generalised complex structure as in Proposition \ref{poisson_ex:prop} with a non-trivial bundle
of abelian quadratic Lie algebras. 
}
\end{exa}

\section{Appendix: integrability in terms of components}

Let $E = \mathbb{T}M\oplus \mathcal G$ be a standard Courant algebroid, with quadratic Lie algebra bundle 
$(\mathcal G , [\cdot , \cdot ]_{\mathcal G}, \langle \cdot , \cdot \rangle_{\mathcal G} )$ and data $(\nabla , R ,H).$
For simplicity,  $\langle \cdot , \cdot \rangle_{\mathcal G}$  will be denoted by $\langle \cdot , \cdot \rangle$.  

\begin{lem}\label{integr-comp} A generalised almost complex structure  $\mathcal J$   on $E$  with components $J, A, B, C, \mu , \nu $
is integrable if and only if the following relations hold:\

\begin{enumerate}
\item for any $X, Y\in {\mathfrak X}(M)$,
\begin{align}
\nonumber& {\mathcal L}_{JX} (JY) - \mathcal L_{X} Y - J \mathcal L_{X} (JY) + J\mathcal L_{Y} (JX)\\
\nonumber& +  B (  i_{X}  i_{JY} H +  i_{JX} i_{Y} H +i_{X} i_{Y} (dC) -  C\mathcal L_{X} Y )  \\
\nonumber& + 2B ( \langle i_{X}R, \mu (Y) \rangle 
-  \langle i_{Y}R, \mu (X) \rangle )\\
\label{unu}& + \nu^{*} \left(R(X, JY) - R(Y, JX) +\nabla_{X} (\mu Y) -\nabla_{Y} (\mu X) \right) =0
\end{align} 
\item for any $X, Y\in  {\mathfrak X}(M)$,
\begin{align}
\nonumber& i_{JY} i_{JX} H - i_{Y} i_{X} H  + J^{*} \left( i_{JY} i_{X} H - i_{JX} i_{Y}H \right) \\
\nonumber& + 2\langle i_{JY} R, \mu X\rangle - 2\langle i_{JX} R, \mu Y \rangle 
 - 2 J^{*} \left( \langle i_{X}R, \mu Y\rangle - \langle i_{Y}R, \mu X\rangle \right)\\
\nonumber& +\mathcal L_{JX} (C) Y -\mathcal L_{JY} (C) X 
+ d ( C(X, JY)) + J^{*} \left(i_{Y} i_{X} (dC) + C\mathcal L_{X} Y \right)\\
\nonumber& + 2 \langle \nabla (\mu X), \mu Y\rangle  +\mu^{*} \left( R(X, JY) +  R(JX, Y) +\nabla_{X} (\mu Y) -\nabla_{Y} (\mu X) \right)\\
\nonumber& = 0;
\end{align}\item for any $X, Y\in {\mathfrak X}(M)$,
\begin{align}
\nonumber&  R(JX, JY)  - R(X, Y) +\nabla_{JX} (\mu Y) -\nabla_{JY} (\mu X)  + [\mu X, \mu Y]_{\mathcal G} \\
\nonumber& -\mu ( \mathcal L_{X} (JY) -\mathcal L_{Y} (JX) )\\
\nonumber& +  \nu \left(i_{X} i_{JY}H +  i_{JX} i_{Y} H +i_{X} i_{Y} (dC) -  C\mathcal L_{X} Y\right)\\
\nonumber& +  2 \nu \left( \langle i_{X}R, \mu Y\rangle  -  \langle i_{Y}R, \mu X\rangle \right)\\
\nonumber& - A\left( R(X, JY) - R(Y, JX) +\nabla_{X} (\mu Y) -\nabla_{Y} (\mu X) \right)  =0;
\end{align}
\item for any $\xi , \eta \in \Omega^{1}(M)$,
\begin{equation}\label{poisson}
\mathcal L_{B\xi } (B\eta ) - B\left( \mathcal L_{B\xi } \eta - \mathcal L_{B\eta } \xi \right) + Bd  ( B(\xi , \eta ) ) =0;
\end{equation}
\item for any $\xi , \eta \in \Omega^{1}(M)$,
\begin{align}
\nonumber&\mathcal L_{B\eta } (J^{*}\xi )  - \mathcal L_{B\xi } (J^{*}\eta ) + d  (B(J^{*}\xi , \eta ) )
+ J^{*} \left( \mathcal L_{B\xi } \eta - \mathcal L_{B\eta } \xi - d  ( B(\xi , \eta ) )\right) \\
\nonumber& - 2\left( \langle i_{B\xi } R, \nu \eta  \rangle - \langle i_{B\eta } R, \nu \xi  \rangle \right) + 2 \langle \nabla (\nu \xi ), \nu \eta  \rangle
 + i_{B\eta} i_{B\xi } H =0;
\end{align}
\item for any $\xi , \eta \in \Omega^{1}(M)$,
\begin{align}
\nonumber& R(B\xi , B\eta )  + \nabla_{B\xi } (\nu \eta ) -\nabla_{B\eta } (\nu \xi ) +  [\nu \xi , \nu \eta ]_{\mathcal G}\\
\nonumber& - \nu \left( \mathcal L_{B\xi } \eta - \mathcal L_{B\eta } \xi  -  d ( B(\xi  , \eta )) \right) =0; 
\end{align}
\item for any $r, \tilde{r}  \in \Gamma (\mathcal G)$,
\begin{align}
\nonumber& {\mathcal L}_{\nu^{*} r} ( \nu^{*} \tilde{r})  - 2 B \left( \langle i_{\nu^{*} r} R, \tilde{r} \rangle 
-  \langle i_{\nu^{*} \tilde{r}} R, {r} \rangle  +\langle ( \nabla A)r,  \tilde{r} \rangle \right)\\
\nonumber& + \nu^{*} \left( [Ar, \tilde{r} ]_{\mathcal G} - [A\tilde{r}, r]_{\mathcal G} -\nabla_{\nu^{*} r} \tilde{r} +\nabla_{\nu^{*} \tilde{r}} r\right) 
=0;
\end{align}

\item for any $r, \tilde{r}  \in \Gamma (\mathcal G)$,
\begin{align}
\nonumber& i_{\nu^{*} \tilde{r}} i_{\nu^{*} r} H +\mathcal L_{\nu^{*}r} (\mu^{*} \tilde{r}) - \mathcal L_{\nu^{*}\tilde{r}} (\mu^{*}{r})
+ 2  d \langle \mu^{*}r, \nu^{*}\tilde{r} \rangle \\
\nonumber& + 2\left( \langle i_{\nu^{*} r} R, A\tilde{r}\rangle -  \langle i_{\nu^{*} \tilde{r}} R, A{r}\rangle 
+\langle \nabla  (Ar ),  A \tilde{r} \rangle - \langle \nabla r, \tilde{r} \rangle  \right)\\
\nonumber& + 2 J^{*} \left( \langle i_{\nu^{*} r} R, \tilde{r}\rangle  - \langle i_{\nu^{*} \tilde{r}} R, {r}\rangle 
+\langle ( \nabla A) r , \tilde{r} \rangle \right)\\
\nonumber& + \mu^{*} \left( [Ar, \tilde{r} ]_{\mathcal G} + [r, A\tilde{r} ]_{\mathcal G} -\nabla_{\nu^{*} r} \tilde{r} +\nabla_{\nu^{*}\tilde{r}} r
\right) =0;
\end{align}
\item for any $r, \tilde{r}  \in \Gamma (\mathcal G)$,
\begin{align}
\nonumber&  [Ar, A\tilde{r} ]_{\mathcal G}  -[ r, \tilde{r}]_{\mathcal G} - A\left( [Ar, \tilde{r} ]_{\mathcal G} + [r, A\tilde{r} ]_{\mathcal G}  \right)\\
\nonumber& - ( \nabla_{\nu^{*} r}  A) \tilde{r} +  ( \nabla_{\nu^{*} \tilde{r}}  A ) {r} +  R (\nu^{*} r, \nu^{*}\tilde{r} ) \\
\nonumber& - 2\nu \left( \langle i_{\nu^{*}r}R, \tilde{r} \rangle - \langle i_{\nu^{*} \tilde{r}} R, r\rangle +
\langle  (\nabla A) r, \tilde{r} \rangle \right) =0;
\end{align}
\item  for any $X\in {\mathfrak X}(M)$ and $\xi \in \Omega^{1} (M)$, 
\begin{align}
\nonumber& \mathcal L_{JX} (B\xi ) - B( \mathcal L_{JX} \xi ) - J \mathcal L_{X} (B\xi )  +  B\mathcal L_{X} (J^{*}\xi )\\ 
\nonumber& + B\left(i_{X}  i_{B\xi } H +  2\langle i_{X}R, \nu \xi  \rangle \right) + \nu^{*}  \left(\nabla_{X} (\nu \xi ) + R(X, B\xi ) \right) =0;
\end{align}
\item for any $X\in {\mathfrak X}(M)$ and $\xi \in \Omega^{1} (M)$, 
\begin{align}
\nonumber& i_{B\xi } i_{JX} H 
+ J^{*} i_{B\xi } i_{X} H  -\mathcal L_{JX} (J^{*}\xi )  - \mathcal L_{X} \xi+ J^{*}\left(  \mathcal L_{JX} \xi - \mathcal L_{X} (J^*\xi ) \right) \\
\nonumber& + i_{B\xi } i_{X} (dC)
- \mathcal L_{X} (i_{B\xi } C ) 
+ 2 (\langle i_{B\xi } R, \mu X\rangle - \langle i_{JX}R, \nu \xi \rangle )\\
\nonumber&  + 2\langle \nabla (\mu X) , \nu \xi \rangle  
  - 2 J^{*} \langle i_{X} R, \nu \xi \rangle  +\mu^{*}\left(  \nabla_{X} (\nu \xi ) + R(X, B\xi  ) \right)\\
  \nonumber&  =0;
\end{align}
\item for any $X\in {\mathfrak X}(M)$ and $\xi \in \Omega^{1} (M)$, 
\begin{align}
\nonumber&  \nabla_{JX} (\nu \xi ) - \nu \mathcal L_{JX} \xi 
-\nabla_{B\xi } (\mu X) -\mu \mathcal L_{X} (B\xi )+[\mu (X), \nu (\xi ) ]_{\mathcal G}\\
\nonumber& - \nu \left( i_{B\xi } i_{X} H - {\mathcal L}_{X} (J^{*}\xi ) - 2\langle i_{X} R, \nu \xi \rangle \right) - A \left( \nabla_{X} (\nu \xi ) + R(X, B\xi ) \right)\\
\nonumber& +R(JX, B\xi ) =0;
\end{align}
\item for any $r\in \Gamma (\mathcal G)$  and $\xi \in \Omega^{1} (M)$,
\begin{equation}
(\mathcal L_{\nu^{*} r} B) \xi  + 2B\left( \langle i_{B\xi } R, r\rangle +\langle \nabla  r, \nu \xi \rangle \right) +  \nu^{*} \left(\nabla_{B\xi } r - [r, \nu \xi ]_{\mathcal G} \right) =0;
\end{equation}
\item for any $r\in \Gamma (\mathcal G)$  and $\xi \in \Omega^{1} (M)$, 
\begin{align}
\nonumber& \mathcal L_{B\xi }
( \mu^{*} r)+ d ( B (\mu^{*} r, \xi )) -\mu^{*}  \nabla_{B\xi } r\\
\nonumber& +  i_{\nu^{*} r} i_{B\xi } H + ({\mathcal L}_{\nu^{*}r} J^{*})  \xi  + 2\langle i_{\nu^{*} r} R,  \nu \xi \rangle 
+ 2\langle i_{B\xi } R, Ar\rangle + 2\langle \nabla (Ar), \nu \xi  \rangle \\
\nonumber& + 2 J^{*} \left( \langle i_{B\xi } R, r\rangle +\langle \nabla r, \nu \xi \rangle\right)    + \mu^{*} 
[r, \nu \xi ]_{\mathcal G}  =0;
\end{align}
\item for any $r\in \Gamma (\mathcal G)$  and $\xi \in \Omega^{1} (M)$, 
\begin{align}
\nonumber& R (\nu^{*}r, B\xi ) +\nabla_{\nu^{*}r} (\nu \xi )  + ( \nabla_{B\xi } A) r -[Ar, \nu \xi ]_{\mathcal G} + A [r, \nu \xi ]_{\mathcal G}  \\
\nonumber& +  2\nu \left( \langle i_{B\xi } R, r\rangle +\langle \nabla r, \nu \xi \rangle 
- \frac{1}{2} \mathcal L_{\nu^{*}r} \xi \right)  
=0;
\end{align}

\item for any $r\in \Gamma (\mathcal G)$ and $X\in {\mathfrak X}(M)$,
\begin{align}
\nonumber& \mathcal L_{JX} (\nu^{*} r)   -  
J \mathcal L_{X} ( \nu^{*} r) -  2B\left(  \langle i_{JX} R, r\rangle  + \langle i_{X}R, Ar\rangle \right)   \\
\nonumber& - \nu^{*} \left( \nabla_{JX} r  + \nabla_{X} (Ar) + [\mu X, r]_{\mathcal G} - R( X, \nu^{*} r) \right)\\
\nonumber&  - B\left( i_{\nu^{*} r} i_{X} H  + \mathcal L_{X} (\mu^{*} r)  - 2 \langle \nabla ( \mu X), r\rangle  \right) =0;
\end{align}
\item for any $r\in \Gamma (\mathcal G)$ and $X\in {\mathfrak X}(M)$,
\begin{align}
\nonumber& i_{JX} i_{\nu^{*} r} H - J^{*}  i_{\nu^{*} r} i_{X} H   -\mathcal L_{JX} (\mu^{*} r) 
- J^{*}  \mathcal L_{X} (\mu^{*} r)\\
\nonumber&   - 2\langle i_{JX} R, Ar\rangle + 2 \langle i_{X} R, r\rangle  -  2 J^{*} (  \langle i_{X} R, Ar\rangle   +   \langle i_{JX} R, r\rangle  ) 
\\
\nonumber& +  i_{X} i_{\nu^{*}r} (dC) +\mathcal L_{X} (C\nu^{*} r) 
 - 2 \langle i_{\nu^{*} r} R, \mu X\rangle + 2\langle \nabla (\mu X), Ar\rangle \\
\nonumber&   +  2J^{*}  \langle \nabla (\mu X)  , r\rangle + \mu^{*} \left( \nabla_{JX} r +  \nabla_{X} (Ar)  + [\mu X, r]_{\mathcal G} - R(X, \nu^{*} r) \right)\\
\nonumber&   =0;
\end{align}

\item for any $r\in \Gamma (\mathcal G)$ and $X\in {\mathfrak X}(M)$,
\begin{align}
\nonumber& R (\nu^{*}r, JX)  + 2\nu ( \langle i_{X} R, Ar\rangle + \langle i_{JX} R, r\rangle ) - A R(\nu^{*} r, X) \\
\nonumber& +(\nabla_{JX} A) r -\nabla_{X} r- A\nabla_{X}(Ar) \\
\nonumber& +\nabla_{\nu^{*} r} ( \mu X) - \mu (\mathcal L_{\nu^{*} r} X) 
+[\mu X, Ar]_{\mathcal G} 
 - A  [\mu X, r]_{\mathcal G}  \\
\nonumber& + \nu \left( i_{\nu^{*}r} i_{X} H +\mathcal L_{X} (\mu^{*} r) - 2\langle \nabla (\mu X) , r\rangle\right)  =0.
\end{align}
\end{enumerate} 
\end{lem}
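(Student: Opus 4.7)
The plan is to verify the 18 equations by direct evaluation of the Nijenhuis tensor. Since $\mathcal J$ is integrable iff $N_{\mathcal J}=0$, and $N_{\mathcal J}$ is $C^\infty(M)$-linear in each argument (a section of $\bigwedge^3 E^*$ via the metric identification), it suffices to evaluate $N_{\mathcal J}(u,v)$ for $u,v$ each of one of the three pure types in $E = TM \oplus T^*M \oplus \mathcal G$ and to read off the three components of the result in $TM$, $T^*M$ and $\mathcal G$. This yields exactly $\binom{3}{2}+3=6$ unordered pairs of types times $3$ projections, i.e.\ $18$ scalar identities -- matching one-to-one with the $18$ equations of the statement. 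The enumeration (in groups of three) should correspond to the pairs $(X,Y)$, $(X,\xi)$, $(X,r)$, $(\xi,\eta)$, $(\xi,r)$, $(r,\tilde r)$, with the three equations in each group being the $TM$, $T^*M$ and $\mathcal G$ projections respectively.

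Concretely, I would proceed as follows. First, read off from \eqref{form-J} that $\mathcal J X = JX + CX + \mu X$, $\mathcal J \xi = B\xi - J^*\xi + \nu \xi$ and $\mathcal J r = -\nu^* r - \mu^* r + Ar$. Next, fix a type pair $(u,v)$ and expand
$$
N_{\mathcal J}(u,v) = [\mathcal J u, \mathcal J v] - [u,v] - \mathcal J[\mathcal J u, v] - \mathcal J[u,\mathcal J v]
$$
using the Dorfman-bracket formulas \eqref{dorfman1}, supplemented by \eqref{dorfman2} to treat brackets whose slot order is opposite to those listed there (e.g.\ $[\xi,X] = -\mathcal L_X\xi + 2\,d\langle X,\xi\rangle$ and the analogous rules for $[r,X]$ and $[\eta,r]$). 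Because $\mathcal J$ sends each summand to a sum of all three summands, $[\mathcal J u, \mathcal J v]$ unfolds into up to nine Dorfman brackets; projecting the full expression onto $TM$, $T^*M$ and $\mathcal G$ produces the three equations of that group.

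Along the way, the structural identities $d^\nabla R=0$ and $dH = \langle R\wedge R\rangle_{\mathcal G}$ from \eqref{def-data}, together with the metric property of $\nabla$ and its compatibility with $[\cdot,\cdot]_{\mathcal G}$, are what allow the curvature and torsion contributions to be re-packaged into the precise form displayed in the statement (e.g.\ the $R(\cdot,\cdot)$, $\nabla(\cdot)$ and $\langle i_\bullet R,\cdot\rangle$ summands appearing in each equation). One should also use freely that $\nabla$-compatibility with the scalar product allows rewriting $d\langle r, \tilde r\rangle$ in terms of $\nabla r$ and $\nabla\tilde r$.

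The main obstacle is not conceptual but combinatorial: the number of terms generated by the nine sub-brackets -- each carrying Lie derivatives, interior products with $H$ and $R$, covariant derivatives and pairings $\langle\cdot,\cdot\rangle_{\mathcal G}$ -- together with the need to track the adjoints $J^*, B^*, C^*, A^*, \mu^*, \nu^*$ and the sign conventions of the exterior calculus make bookkeeping essential. Two useful consistency checks are: (i) the total skew-symmetry of $N_{\mathcal J}$ in its three arguments (a consequence of $\mathcal J^* = -\mathcal J$, $\mathcal J^2 = -\mathrm{Id}$ and \eqref{dorfman2}), which relates different equations of the list by metric duality; and (ii) the tensoriality of each final expression, which provides an internal check that all ambient derivatives have combined into pointwise operators on the given arguments.
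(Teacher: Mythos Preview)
Your approach is correct and essentially identical to the paper's: direct evaluation of $N_{\mathcal J}(u,v)$ for $u,v$ of pure type, followed by projection onto the three summands. One small correction: the Bianchi-type identities $d^\nabla R=0$ and $dH=\langle R\wedge R\rangle_{\mathcal G}$ are not actually used in this computation---the simplifications that bring the raw output into the displayed form are ordinary Cartan-calculus identities such as $\mathcal L_X(CY)-\mathcal L_Y(CX)+d(C(X,Y))=i_Yi_X(dC)+C(\mathcal L_XY)$ (for relations 1--3 and 11) and consequences of the metric compatibility of $\nabla$, e.g.\ $\langle\nabla(Ar),\tilde r\rangle-\langle\nabla(A\tilde r),r\rangle+d\langle A\tilde r,r\rangle=\langle(\nabla A)r,\tilde r\rangle$ (for relations 7--9).
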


\begin{proof} The claim   follows  by projecting  the equality $N_{\mathcal J} (u, v) =0$
to $TM$, $T^{*}M$ and $\mathcal G$,  by letting $u$ and $v$ be sections of 
$TM$, $T^{*}M$ and $\mathcal G$, and by using the expression of $\mathcal J$ in terms of components.
In the first, second and third relations we also use 
\begin{equation}\label{c}
\mathcal L_{X} (CY) - \mathcal L_{Y} (CX) + d \left( C(X, Y)\right) = i_{Y} i_{X} (dC) + C(\mathcal L_{X} Y ),
\end{equation}
for any $X, Y\in {\mathfrak X}(M)$, 
in  the 7th, 8th and 9th relations we use
\begin{equation}
\langle \nabla (Ar), \tilde{r} \rangle - \langle \nabla (A\tilde{r}), r\rangle + d \langle A\tilde{r}, r\rangle =
\langle \nabla (A) r, \tilde{r}\rangle,
\end{equation}
for any $r, \tilde{r} \in \Gamma (\mathcal G)$, 
in the 11th relation we use again (\ref{c}) with $Y := B\xi $ 
and in the 17th relation we use
\begin{equation}
 \mathcal L_{\nu^{*} r} (CX) - d \left( C(X, \nu^{*} r)\right) + C \mathcal L_{X} (\nu^{*} r)
= i_{X} i_{\nu^{*} r} (dC) +\mathcal L_{X} (C\nu^{*} r),
\end{equation}
for any $X\in {\mathfrak X}(M)$,  $r\in \Gamma (\mathcal G)$.
\end{proof}

\begin{thm}\label{integr-comp-thm}
\label{integr-thm} A generalised almost complex structure  $\mathcal J$ with components $J, A, B, C, \mu , \nu $
is integrable if and only if   relations 1-9 and 12
 in Lemma \ref{integr-comp} hold. 
\end{thm}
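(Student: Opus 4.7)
The plan rests on the fact, recorded in the preamble, that the Nijenhuis tensor $N_{\mathcal J}$ is totally skew-symmetric when viewed as a section of $\bigwedge^{3} E^{*}$ via the scalar product of $E$. Hence the integrability of $\mathcal J$ is equivalent to the vanishing of the scalar $\langle N_{\mathcal J}(u,v),w\rangle$ for all $u,v,w\in \Gamma (E)$, and this scalar is totally skew-symmetric in $(u,v,w)$.

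Since the scalar product pairs $TM$ with $T^{*}M$ and $\mathcal G$ with itself while $TM$ and $T^{*}M$ are isotropic, the $TM$-, $T^{*}M$- and $\mathcal G$-projections of a vector in $E$ are detected by pairing with sections of $T^{*}M$, $TM$ and $\mathcal G$, respectively. Consequently, each of the eighteen relations of Lemma \ref{integr-comp} is the vanishing of $\langle N_{\mathcal J}(u,v),w\rangle$ for some prescribed triple of types of $(u,v,w)$ in $\{TM,T^{*}M,\mathcal G\}$: the types of $u,v$ are dictated by the pair of chosen inputs, and the type of $w$ is the one dual to the chosen projection. By total skew-symmetry, two of the eighteen relations coincide (up to sign) precisely when they correspond to the same unordered multiset of types. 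The number of multisets of size $3$ drawn from a $3$-element set is $\binom{3+3-1}{3}=10$, so the eighteen relations collapse into at most ten independent ones.

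Writing $T,S,G$ for $TM,T^{*}M,\mathcal G$, the remaining step is bookkeeping: one checks that the ten relations 1-9 and 12 furnish exactly one representative for each multiset class. Namely, $\{T,T,T\}$ is relation 2; $\{T,T,S\}$ is relation 1, equivalent to 11; $\{T,T,G\}$ is relation 3, equivalent to 17; $\{T,S,S\}$ is relation 5, equivalent to 10; $\{S,S,S\}$ is relation 4; $\{S,S,G\}$ is relation 6, equivalent to 13; $\{S,G,G\}$ is relation 7, equivalent to 15; $\{T,G,G\}$ is relation 8, equivalent to 18; $\{G,G,G\}$ is relation 9; and finally $\{T,S,G\}$ is relation 12, equivalent to 14 and 16. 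Consequently, the remaining eight relations 10, 11, 13-18 follow from 1-9 and 12 by total skew-symmetry, and the theorem reduces to Lemma \ref{integr-comp}. The main subtlety is to make the identification of each formula of Lemma \ref{integr-comp} with the appropriate scalar equation $\langle N_{\mathcal J}(u,v),w\rangle=0$ rigorous, keeping careful track of the factors of $\frac{1}{2}$ that arise from the pairing of $TM$ with $T^{*}M$ in the scalar product of $E$.
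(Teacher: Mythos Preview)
Your proposal is correct and follows essentially the same argument as the paper: both exploit that $N_{\mathcal J}\in\Gamma(\bigwedge^{3}E^{*})$ and use the decomposition of $\bigwedge^{3}E^{*}$ induced by $E=TM\oplus T^{*}M\oplus\mathcal G$ into ten irreducible pieces, so that the eighteen projected equations of Lemma~\ref{integr-comp} collapse to the ten equations 1--9 and 12. Your multiset bookkeeping and explicit pairing of redundant relations (e.g.\ $1\leftrightarrow 11$, $12\leftrightarrow 14\leftrightarrow 16$, etc.) is in fact more detailed than the paper's proof, which simply lists the ten surviving components without spelling out the equivalences.
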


\begin{proof}    In view of the decomposition
\begin{align}
\nonumber& {\bigwedge}^{3} E^{*} = {\bigwedge}^{3} (TM)^{*} \oplus {\bigwedge}^{3} (T^{*}M)^{*} \oplus {\bigwedge}^{3} \mathcal G^{*}\\
\nonumber&  \oplus {\bigwedge}^{2} (TM)^{*} \wedge (T^{*}M)^{*} \oplus {\bigwedge}^{2} (TM)^{*} \wedge \mathcal G^{*} \oplus {\bigwedge}^{2} (T^{*}M)^{*} 
\wedge (TM)^{*} \\\
\nonumber& \oplus  {\bigwedge}^{2} (T^{*}M)^{*}\wedge \mathcal G^{*} \oplus {\bigwedge}^{2}\mathcal G^{*} \wedge (TM)^{*} \oplus {\bigwedge}^{2} \mathcal G^{*}
\wedge (T^{*}M)^{*} \\
\nonumber& \oplus (TM)^{*} \wedge (T^{*}M)^{*} \wedge \mathcal G^{*},
\end{align}
and considering $N_{\mathcal J}$ as a map $E\times E \to E$ by means of the scalar product $\langle \cdot , \cdot \rangle$ we obtain that  $\mathcal J$ is  integrable if and only if the components
\begin{align}
\nonumber&  \pi_{T^{*}M}   N_{\mathcal J}\vert_{ TM\times TM},\ \pi N_{\mathcal J}\vert_{ T^{*}M\times T^{*}M},\    
 \pi_{\mathcal G} N_{\mathcal J}\vert_{\mathcal G\times \mathcal G},\, 
 \pi N_{\mathcal J}\vert_{TM\times TM},\\
 \nonumber&  \pi_{\mathcal G} N_{\mathcal J}\vert_{TM\times TM},\  \ \pi_{T^{*}M}   N_{\mathcal J}\vert_{ T^*M\times T^*M},\
 \pi_{\mathcal G}  N_{\mathcal J} \vert_{T^{*}M\times  T^{*}M},\ 
 \pi_{T^{*}M} N_{\mathcal J}\vert_{ \mathcal G\times \mathcal G},\\
\nonumber&  \pi N_{\mathcal J} \vert_{ \mathcal G \times\mathcal G},\ 
\pi_{\mathcal G} N_{\mathcal J}\vert_{ TM\times T^{*}M},
\end{align}
vanish, or, equivalently, the  relations from the statement of the theorem hold. 
Therefore,   the 18 relations  in  Lemma~\ref{integr-comp}
reduce to the 10 relations  from the statement of the theorem.  
\end{proof}
\begin{rem}{\rm Although the relations 10-11 and 13-18 in Lemma~\ref{integr-comp} follow from the other equations
we decided to keep them in the paper for future use. }
\end{rem}

V. Cort\'es: vicente.cortes@math.uni-hamburg.de\

Department of Mathematics and Center for Mathematical Physics, University of Hamburg,  Bundesstrasse 55, D-20146, Hamburg, Germany.\\

L. David: liana.david@imar.ro\

Institute of Mathematics  `Simion Stoilow' of the Romanian Academy,   Calea Grivitei no.\ 21,  Sector 1, 010702, Bucharest, Romania.

\end{document}